\newtheorem{thm}{Theorem}[section]
\newtheorem{cor}[thm]{Corollary}
\newtheorem{conj}[thm]{Conjecture}
\theoremstyle{definition}
\newtheorem{defn}[thm]{Definition}
\newtheorem{example}[thm]{Example}
\newcommand{\Z}{\mathbb Z}
\newcommand{\note}[1]{}
\newcommand{\be}{\begin{enumerate}}
\newcommand{\ee}{\end{enumerate}}\usepackage{enumerate}
\begin{document}
\title[Distribution in Planar Nearrings]{Distribution and Generalized Centre in Planar Nearrings}
\author[T. Boykett]{Tim Boykett\\Institute for Algebra, Johannes Kepler University, 4040 Linz, Austria, and\\
Time's Up Research, Industriezeile 33b, 4020 Linz, Austria\\
tim@timesup.org, tim.boykett@jku.at}

\MSdates[`Received date']{`Accepted date'}

\label{firstpage}

\maketitle

\begin{abstract}
Nearrings are the nonlinear generalization of rings.
Planar nearrings play an important role in nearring theory, both from the structural side, being
close to generalized nearfields, as well as from an applications perspective, in geometry and combinatorial designs
related to difference families.
In this paper we investigate the distributive elements of planar nearrings.
If a planar nearring has nonzero distributive elements, then it is an extension of its zero multiplier part by an abelian group.
In the case that there are distributive elements that are not zero multipliers,
then this extension splits, giving an explicit description of the nearring. 
This generalizes the structure of planar rings.
We provide a family of examples where this does not occur, the distributive elements being precisely the zero multipliers.
We apply this knowledge to the question of determining the generalized center of
planar nearrings as well as finding new proofs of other older results.
\end{abstract}




\section{Introduction}

Nearrings are the nonlinear generalization of rings, having only one distributive law.
Planar nearrings are a special class of nearrings that generalize nearfields, themselves
a generalization of fields, which  play an important role in the structural
theory of nearrings\cite{wendtMatrix,wendtMin}, 
as well as having important geometric and combinatorial properties\cite{wendtAg,clay,KePilz}.

In this paper we look at the distributive elements of a planar nearring, in some sense the
ring-like elements of the planar nearring. 
This extends Aichinger's work on planar rings \cite{aichingerPlanarRings}.
We find that the general structure of planar nearrings with nontrivial distributive elements
generalizes the structure of planar rings.
We use this information to then investigate the generalized center of a planar nearrings, 
building on Farag, Cannon, Kabza and Aichinger's work \cite{AF04,CFK07}.
This was the original motivation for this work and can be found in  \S \ref{secGenCentre}.
As the results there indicate, it was necessary to obtain a good understanding of the
distributive elements of a planar nearring, which we undertake in \S \ref{secDist} and find
some small applications of in \S \ref{secApplications}.
The structure implied by nontrivial distributive elements forces several special forms
of planar nearrings, which we introduce in \S \ref{secNearVecSpaces} and use repeatedly throughout.

In the next section we introduce the necessary background about nearrings and planar nearrings.

\section{Background}

A \emph{(right) nearring} $(N,+,*)$ is an algebra such that
\begin{itemize}
 \item $(N,+)$ is a group with identity $0$
 \item $(N,*)$ is a semigroup
 \item for all $a,b,c\in N$, $(a+b)*c = a*c+b*c$ (right distributivity)
\end{itemize}
While it is readily seen that $0*a=0$ for all $a\in N$, it is not necessary that $a*0=0$ for all $a$. 
If this is the case, the nearring is called \emph{zero symmetric}.

A subset of $N$ is a \emph{subnearring} if it is closed as an additive group and a multiplicative semigroup.
A subnearring $I \subseteq N$ is a \emph{right ideal} if $I$ is a normal additive subgroup and for all $i\in I$, $n\in N$, $i*n \in I$.
A subnearring $I \subseteq N$ is a \emph{left ideal} if $I$ is a normal additive subgroup and for all $i\in I$, $n,m\in N$, $n*m - n*(m+i) \in I$.
A subnearring that is a left and right ideal is an \emph{ideal}.
As we expect, ideals correspond to the kernels of nearring homomorphisms.

We write $N^*$ for $N\setminus \{0\}$. If $(N^*,*)$ is a group, then $(N,+,*)$ is a \emph{nearfield},
generalizing fields by having a single distributive law.
Nearfields play an important role in geometries,
nearring and nearfield theory are well discussed in \cite{clay,ferrerobook,pilzbook,waehling}.
Nearfield were first described by Dickson using a process that is now called a Dickson nearfield.
All but 7 finite nearfields are Dickson.

The set $D(N) = \{n \in N \vert n(a+b)=na+nb\, \forall a,b\in N\}$  of \emph{distributive elements} 
of $N$ is the core investigation of this paper.

Elements $a,b\in N$ are called \emph{equivalent multipliers} if $x*a=x*b$ for all $x\in N$.
We write $a\cong b$, an equivalence relation.
A nearring  $(N,+,*)$ is called \emph{planar} if
\begin{itemize}
 \item The equivalent multiplier equivalence relation has at least 3 classes
 \item For every $a,b,c\in N$, $a\not\cong b$, the equation $x*a = x*b +c$ has a unique solution.
\end{itemize}

Every field other than $\Z_2$ is a planar nearring.
A finite nearfield (except $\Z_2$) is always planar, there are known to be nonplanar infinite nearfields \cite[page 46]{waehling}.
Planar nearrings are zero symmetric.

Planar nearrings can be described by fixed point free automorphism groups of groups.
Let $(N,+)$ be a group.
Then we say that some nonidentity $\phi \in Aut(N)$ is \emph{fixed point free} if for all $n\in N$, $n\phi=n$ iff $n=0$.
A group of automorphisms is fixed point free if all nonidentity elements are fixed point free.
Now let $\Phi \leq Aut(N)$ be a group of fixed point free automorphisms of $N$ acting from the right
such that $-id + \phi$ is bijective for all non identity $\phi \in \Phi$.
We write $a\Phi$ for the orbit containing $a$.
Let $R\subseteq N$ be a set of orbit representatives, $M\subseteq R$  a set.
Every element $a\in N$ can be written uniquely as $r_a\phi_a$ for some $r_a \in R$, $\phi_a\in\Phi$.
We define a multiplication by:
\begin{equation}
 a*b =
 \begin{cases}
  0 & r_b \in M  \\
  r_a(\phi_a\phi_b)=a\phi_b\,&r_b \not\in M
 \end{cases}
\end{equation}
Then $(N,+,*)$ is a planar (right) nearring.
All planar nearrings can be so derived\cite{clay}.

Let $a\Phi^* = a\Phi \cup \{0\}$.
We call $M$ the set of \emph{zero multipliers}.
If $a \in M$ then we call $a\Phi$ a \emph{zero multiplier orbit}, and we will call all elements of this orbit zero multipliers.
Then $n\in N^*$ is a zero multiplier iff $n \in M\Phi^*$ iff $a*n=0$ for all $a\in N$.
Note that $a*b=0$ iff $a=0$, $b=0$ or $r_b\in M$. In particular, $n*b=0$ for all $n$ iff $b=0$ or $r_b\in M$.
The elements $r\in R\setminus M$ are right identities, $x*r = x$ for all $n\in N$.
A planar nearring has a left identity iff it has an identity iff it has exactly one nontrivial $\Phi$ orbit iff it is a planar nearfield.
We use $Z(\Phi)$ to denote the centre of the group $\Phi$ and remark that we will use the British spelling throughout this paper.
The distributive elements of a nearfield are called the \emph{kern} of the nearfield and contains the multiplicative centre.

There are related nearrings. 
The trivial nearring on any group $(N,+)$ with multiplication $a*b=0$ would correspond to $\cong$ having one equivalence class.
The Malone trivial nearrings \cite{CFKN16} correspond to $\Phi$ having order 1, so $R=N\setminus \{0\}$, 
$a*b =0$ if $b\in M$ and $a*b=a$ otherwise.
It is interesting to note that the complemented Malone nearrings in \cite{CFKN16} are a generalization of 
planar nearrings with $\Phi$ of order 2, allowing fixed points (i.e. elements of additive order 2) that are zero multipliers.

In the next section we will look at two special constructions for planar nearrings.
Then in the following section we look at the distributive elements of a planar nearring.
With that knowledge, we will determine the generalized centre of a planar nearring.

\section{(Near) Vector Spaces}
\label{secNearVecSpaces}

In this section we  construct two families of example planar nearrings, which will prove to be
useful in the rest of the paper.

Let $V$ be a vector space over a division ring $D$ of order at least 3 and $\phi: V\rightarrow D$ a vector space epimorphism
which acts from the right.
Define a multiplication $*: V\times V \rightarrow V$ by $a*b = a(b\phi)$.
By \cite[theorem 4.1]{aichingerPlanarRings} and \cite[theorem 5.2.1]{wendtDiss} this is a planar ring and all planar rings have this form.
Using the terminology above, $\Phi$ is isomorphic to the nonzero elements of $D$ under multiplication,
$R\setminus M = \phi^{-1}(1)$ and
the elements of $M$ can be chosen arbitrarily from the orbits that lie completely within $\ker \phi$.
Let $v_1\in R$ be arbitrary. 
If $V$ is a finite dimensional vector space, we can choose a new basis $v_1,\dots,v_n \in V$ such that 
$v_2,\dots,v_n\in \ker \phi$ and for all $x=(x_1,\dots,x_n) \in V$, $x\phi  = x_1$.

We can generalize this construction to nearvector spaces\footnote{We note in passing the existance
of another, similar, definition of a nearvector space used by Karzel and colleagues\cite{karzel84,karzelkist84} in which the right
nearfield scalars operate from the left, giving significantly different properties.}.
We begin with a brief overview and some definitions of nearvector spaces. 
See \cite{Andre} for further details.

\begin{defn} A pair $(V,A)$ is called a \emph{nearvector space} if:
\be
\item $(V,+)$ is a group and $A$ is a set of endomorphisms of $V$, which act from the right;
\item $A$ contains the endomorphisms $0$, {\it id} and $-${\it id};
\item $A^*=A\setminus\{0\}$ is a subgroup of the group Aut$(V)$;
\item $A$ acts fixed point freely on $V$;
\item the quasi-kernel $\{x\in V \,|\, \forall \alpha,\beta\in A, \exists\gamma\in A : x\alpha + x\beta = x\gamma\}$ generates $V$ as a group. 
\ee
\end{defn}

We sometimes refer to $V$ as a {\it nearvector space over $A$}. 
We write $Q(V)$ for the quasi-kernel of $V$.
The elements of $V$ are called {\it vectors} and the members of $A$ {\it scalars}
and it turns out that $A$ is a nearfield. 
The action of $A$ on $V$ is called {\it scalar multiplication}. 
Note that  $(V,+)$ is an abelian group. 
Also, the dimension of the nearvector space, $\dim(V)$, 
is uniquely determined by the cardinality of an independent generating set for $Q(V)$.

In  \cite[theorem 3.4]{vanderWalt92} we find a characterization of finite dimensional nearvector spaces, 
see also  \cite[theorem 4.6]{Andre}.

\begin{thm} Let $V$ be a group and let $A:=D \,\cup\, \{0\}$, where $D$ is a fixed point free group of automorphisms of $V$. 
Then $(V,A)$ is a finite dimensional nearvector space if and only if there exists a finite number of nearfields, 
$F_{1},F_{2},\ldots,F_{n}$, semigroup isomorphisms $\psi_{i}:A \rightarrow F_{i}$ 
and a group isomorphism $\Phi:V \rightarrow F_{1}\oplus F_{2}\oplus \dots \oplus F_{n}$ 
such that if \[\Phi(v) = (x_{1},x_{2}, \dots, x_{n}),\,\,\,(x_{i} \in F_{i})\]
then
\[\Phi(v\alpha) = (x_{1}(\alpha\psi_{1}), x_{2}(\alpha\psi_{2}), \dots, x_{n}(\alpha\psi_{n})),\]
for all $v \in V$ and $\alpha \in A$.
\end{thm}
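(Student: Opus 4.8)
The plan is to treat the two implications separately; the converse (``if'') is a verification of the nearvector space axioms, and the forward direction (``only if'') carries the real content. For the converse, assume $\Phi : V \to F_1\oplus\dots\oplus F_n$ is a group isomorphism with $\Phi(v\alpha) = (x_1(\alpha\psi_1),\dots,x_n(\alpha\psi_n))$ whenever $\Phi(v) = (x_1,\dots,x_n)$. Since each $(F_i,+)$ is abelian, so is $(V,+)$, and that $A^\ast = D$ is a group of automorphisms is assumed. I would check fixed point freeness componentwise: if $\alpha\ne\mathrm{id}$ then $\alpha\psi_i\ne 1_{F_i}$ by injectivity of $\psi_i$, and $x_i(\alpha\psi_i)=x_i$ in the nearfield $F_i$ forces $x_i=0$, so $v\alpha=v$ forces $v=0$. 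That $-\mathrm{id}_V\in A$ I would obtain from the fact that a fixed point free automorphism group of an abelian group has at most one involution — necessarily $-\mathrm{id}$ — together with the observation that the semigroup isomorphisms $\psi_i$ force $\psi_i^{-1}(-1_{F_i})$ to be one and the same element of $A$ for every $i$. Finally $e_i := \Phi^{-1}(0,\dots,1_{F_i},\dots,0)$ lies in $Q(V)$ because $\psi_i$ is onto (so $e_i\alpha + e_i\beta = e_i\gamma$ for $\gamma$ with $\gamma\psi_i = \alpha\psi_i + \beta\psi_i$), and the $e_i$ generate $V$ since every $v$ equals $\sum_i e_i\alpha_i$; this finite generating subset of $Q(V)$ bounds $\dim V$ by $n$, so $(V,A)$ is a finite-dimensional nearvector space.

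For the forward direction, assume $(V,A)$ is a finite-dimensional nearvector space, so $A$ is a nearfield. The first step turns the quasi-kernel into a supply of nearfields: for $u\in Q(V)\setminus\{0\}$ and $\alpha,\beta\in A$, let $\alpha +_u \beta$ be the unique $\gamma\in A$ with $u\alpha + u\beta = u\gamma$ — existence because $u\in Q(V)$, uniqueness because $u\gamma=u\gamma'$ with $\gamma'\ne 0$ forces $\gamma\gamma'^{-1}=\mathrm{id}$ by fixed point freeness. Then $(A,+_u,\cdot)$ is a right nearfield: $+_u$ is commutative and associative with neutral element $0$ because $(V,+)$ is, the $+_u$-inverse of $\alpha$ is $\alpha\cdot(-\mathrm{id})$ since $u\alpha + u(\alpha(-\mathrm{id})) = u\alpha - u\alpha = 0$, and right distributivity $(\alpha+_u\beta)\delta = \alpha\delta +_u\beta\delta$ follows by cancelling $u$ from $(u\alpha+u\beta)\delta = u\alpha\delta + u\beta\delta$. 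The second step chooses the basis: invoking the dimension theory of nearvector spaces, pick an independent generating subset $B=\{b_1,\dots,b_n\}\subseteq Q(V)$ with $n=\dim V<\infty$, set $F_i := (A,+_{b_i},\cdot)$ and $\psi_i := \mathrm{id}_A$ (a semigroup isomorphism onto $F_i$), and define $\Phi : V\to F_1\oplus\dots\oplus F_n$ by $\Phi^{-1}(x_1,\dots,x_n) = b_1x_1+\dots+b_nx_n$. Because $B$ generates $V$, because $(V,+)$ is abelian, and because $b_i\epsilon + b_i\epsilon' = b_i(\epsilon+_{b_i}\epsilon')$, every $v$ collects into such a sum, so $\Phi$ is surjective; injectivity, i.e.\ uniqueness of the representation, is exactly where independence of $B$ enters, via $b_1z_1 \in b_1A\cap\langle b_2,\dots,b_n\rangle = \{0\}$ and fixed point freeness to conclude each $z_i = 0$. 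That $\Phi$ is a group isomorphism again regroups $\sum_i b_ix_i + \sum_i b_iy_i = \sum_i b_i(x_i +_{b_i} y_i)$ by abelianness, and $\Phi(v\alpha) = (x_1\alpha,\dots,x_n\alpha) = (x_1(\alpha\psi_1),\dots,x_n(\alpha\psi_n))$ follows from $\alpha$ being an endomorphism together with $(b_ix_i)\alpha = b_i(x_i\alpha)$, since $\psi_i = \mathrm{id}$.

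I expect the main obstacle to be the second step of the forward direction: the whole statement reduces to exhibiting a basis $B\subseteq Q(V)$ with $V=\bigoplus_i b_iA$, and if one does not take the dimension theory of Andr\'e and van der Walt as a black box, this requires the Steinitz-type exchange argument for nearvector spaces — determining when $u+v\in Q(V)$, organising $Q(V)\setminus\{0\}$ into equivalence classes, and showing that every maximal independent subset of $Q(V)$ generates $V$ and has cardinality $\dim V$. A secondary, easily overlooked point is the verification that $-\mathrm{id}_V\in A$ in the converse, which needs uniqueness of involutions in fixed point free groups rather than a naive distributivity computation and implicitly relies on the $\psi_i$ forcing all the $F_i$ to have mutually isomorphic multiplicative groups.
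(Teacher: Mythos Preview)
The paper does not contain a proof of this theorem: it is quoted as a characterisation result from the literature, with references to \cite[Theorem 3.4]{vanderWalt92} and \cite[Theorem 4.6]{Andre}, and is then used without proof in the discussion of regular decompositions and of planar nearrings built from nearvector spaces. There is therefore no in-paper argument to compare your proposal against.

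For what it is worth, your outline is essentially the standard argument one finds in Andr\'e and van der Walt. The forward direction --- defining the nearfield $(A,+_u,\cdot)$ from each nonzero $u\in Q(V)$, selecting a finite independent generating set $\{b_1,\dots,b_n\}\subseteq Q(V)$, and coordinatising via $v\mapsto(x_1,\dots,x_n)$ where $v=\sum_i b_ix_i$ --- is precisely their construction, and your identification of the basis/exchange theory for $Q(V)$ as the substantive step is accurate. Two minor remarks on the converse: fixed point freeness of $D$ is a standing hypothesis of the theorem and need not be re-verified from the coordinate description; and your argument for $-\mathrm{id}_V\in A$ can be shortened by noting directly that any fixed-point-free involution on an abelian group is inversion, so $\psi_1^{-1}(-1_{F_1})$ is already $-\mathrm{id}_V$ without invoking the other $\psi_i$.
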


In \cite[4.13 ff]{Andre} we find the following.
A nearvector space is \emph{regular} if all the $\psi_i$ are identical (up to nearfield automorphisms).
Every nearvector space has a unique maximal decomposition $V= V_1\oplus V_2\oplus\dots$ into regular
sub-nearvector spaces $V_i$.
Then for all nonzero $u\in Q(V)$, there is precisely one $i$ auch that $u \in V_i$.
Note that a regular vector space over a field is a vector space.

We can construct a planar nearring from a near vector space along the lines used above for vector spaces. 
Let $V$ be a nearvector space over a  nearfield $F$ of order at least 3.
Let $\phi:V\rightarrow F$ be a nearvector space epimorphism and define $a*b = a(b\phi)$.
The right identities $R\setminus M$ are $\phi^{-1}(1)$ and the representatives in $M$ can be chosen arbitrarily.


\begin{example}
 Let $V$ be the two dimensional nearvector space over $F=\Z_5$ with $\psi_1$ 
 the identity and $\psi_2 = (2\,3)$ the automorphism of $F^*$
 exchanging $2$ and $3$, equivalently $x\psi_2= x^3$.
 Note that $Q(V) = F\times\{0\} \cup \{0\} \times F$.
 Taking $\phi(v_1,v_2) = v_1$,
 we  obtain a planar nearring $(V,+,*)$.
 Then $(v_1,v_2)\in D(V)$ iff $v_1 x + v_1y = v_1(x+y)$ and $v_2(x\psi_2) + v_2(y\psi_2) = v_2((x+y)\psi_2)$ for all $x,y,\in F$.
 The first equation always holds, but the second equation can be seen to fail for $x=y=1$ unless $v_2=0$. Thus we see that
 $D(V) = F\times\{0\}$.
\end{example}

\begin{example}
 Let $V$ be the two dimensional nearvector space over $F$, the proper nearfield of order 9 with kern $K$ of order 3.
 Let $\psi_1=\psi_2$ be the identity.
 Then $Q(V) = K\times K$.
 Taking $\phi(v_1,v_2) = v_1$, we  obtain a planar nearring $(V,+,*)$.
 Then $(v_1,v_2)\in D(V)$ iff $v_1 x + v_1y = v_1(x+y)$ and $v_2 x  + v_2 y = v_2(x+y)$ for all $x,y,\in F$.
 These equations hold iff $v_1$ and $v_2$ are both in the kern $K$ of $F$, so $D(V) = K \times K$.
\end{example}


\begin{conj}
Let $F$ be a nearfield with kern $K$,
 let $V$ be a finite dimensional $F$-nearvector space derived planar nearring as above,
 $V=V_1\oplus\dots\oplus V_n$ the regular decomposition with $V_i = F^{n_i}$.
 Then $D(V)=D(V_1)\oplus\dots\oplus D(V_n)$ with $D(V_i) = K^{n_i}$. 
\end{conj}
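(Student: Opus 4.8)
The plan is to first strip away everything except the action of the scalar nearfield, and then handle the regular summands one at a time. Since the multiplication is $a*b=a(b\phi)$ with $\phi\colon V\to F$ a surjective additive map, for $n\in V$ we have $n*(a+b)=n\bigl((a+b)\phi\bigr)=n(a\phi+b\phi)$ and $n*a+n*b=n(a\phi)+n(b\phi)$; as $(a,b)$ ranges over $V\times V$ the pair $(a\phi,b\phi)$ ranges over all of $F\times F$, so
\[
 n\in D(V)\iff n(x+y)=nx+ny\quad\text{for all }x,y\in F,
\]
where $nx$, $ny$, $n(x+y)$ denote the scalar action of $F$ on $V$. In particular this characterization does not see the choice of zero-multiplier set $M$, so the whole question is internal to the nearvector space $(V,A)$.

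Second, I would invoke the regular decomposition $V=V_1\oplus\dots\oplus V_n$. Each $V_i$ is a sub-nearvector space, hence carried into itself by every scalar, so for $n=n^{(1)}+\dots+n^{(n)}$ with $n^{(i)}\in V_i$ one has $n\alpha=n^{(1)}\alpha+\dots+n^{(n)}\alpha$, and (using that $(V,+)$ is abelian) the displayed equivalence breaks up into the same condition on each $n^{(i)}$. Writing $D_i=\{v\in V_i:\ v(x+y)=vx+vy\ \text{for all }x,y\in F\}$, this already gives $D(V)=D_1\oplus\dots\oplus D_n$; the first assertion of the conjecture is exactly this, once $D(V_i)$ is read as $D_i$, i.e.\ as the scalar-distributive part of the summand (note $\phi|_{V_i}$ need not be onto $F$, so $V_i$ need not be a planar nearring on its own, but $D_i$ still makes perfect sense).

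Third, with $V_i=F^{n_i}$ I would use the definition of the kern. If the $F$-action on $F^{n_i}$ is the standard componentwise multiplication, then for $v=(v_1,\dots,v_{n_i})$ the identity $v(x+y)=vx+vy$ is equivalent to $v_k(x+y)=v_kx+v_ky$ in $F$ for each $k$, and by definition this holds for all $x,y\in F$ if and only if $v_k$ lies in the kern $K$; hence $D_i=K^{n_i}$, the asserted value. That $K$ is a sub-division ring of $F$ over which $F$ is a vector space is exactly what makes $K^{n_i}$ the additive subgroup predicted by the previous step.

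The real difficulty, and the reason for leaving this as a conjecture, is the clause ``$V_i=F^{n_i}$'': one must be sure that the $F$-action on a regular summand is genuinely the untwisted componentwise one. The Andr\'e/van der Walt description exhibits a regular summand as $F_i^{\,n_i}$ for a nearfield $F_i$ together with a semigroup isomorphism $\psi_i\colon F\to F_i$ through which $F$ acts, and $\psi_i$ is not required to be additive. When $\psi_i$ is additive (automatic when $F$ is a field, and whenever $F_i$ can be identified with $F$ by an honest nearfield isomorphism) the substitution $x\mapsto x\psi_i^{-1}$ turns the condition back into kern membership, giving $D_i\cong K^{n_i}$. When $\psi_i$ is only a multiplicative automorphism it can genuinely destroy the formula: in the first example the summand governed by $\psi_2=(2\,3)$ on $\Z_5$ has scalar-distributive part $\{0\}$, not $K=\Z_5$. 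So a full proof needs either a structural argument that, for $F$-nearvector-space-derived planar nearrings, each regular summand carries the untwisted componentwise action — equivalently, that the pertinent $\psi_i$ is additive — or a refinement using that $\phi$ is a nearvector space, not merely a semigroup, epimorphism to pin $\psi_i$ down; I expect this, rather than the elementary reductions of the first three steps, to be the crux.
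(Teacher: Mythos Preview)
The statement you are addressing is labelled a \emph{conjecture} in the paper and is left unproved there; there is no argument in the paper to compare yours against. Your proposal is therefore not a reconstruction but an attempt to settle an open statement, and it largely succeeds.

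Your three reduction steps are sound. Step~1 correctly uses surjectivity of the nearvector space epimorphism $\phi$ to translate $n\in D(V)$ into the scalar identity $n(x+y)=nx+ny$ for all $x,y\in F$; Step~2 correctly uses that each regular summand $V_i$ is $A$-invariant and that $(V,+)$ is abelian to split this condition componentwise; and Step~3 is precisely the definition of the kern applied coordinatewise once one reads ``$V_i=F^{n_i}$'' as the standard $F$-module with untwisted componentwise multiplication. Under that reading, your argument is a complete proof, and the conjecture is not open at all.

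You have also correctly located the only genuine issue: whether ``$V_i=F^{n_i}$'' in the hypothesis is meant as an extra assumption (standard action) or as a consequence of regularity. Your observation that the paper's own first example, with $\psi_2=(2\,3)$ on $\Z_5$, gives a regular summand whose scalar-distributive part is $\{0\}\neq K$, shows that the conclusion fails when the $\psi_i$ is merely multiplicative. So the clause must be read as a hypothesis, not a notational convenience; with that reading your Steps~1--3 already constitute the proof. The only refinement I would suggest is to say this more assertively: rather than framing the fourth paragraph as ``the reason for leaving this as a conjecture'', state plainly that the conjecture is a theorem once ``$V_i=F^{n_i}$'' is taken to mean the untwisted $F$-action, and is false otherwise by the paper's own Example preceding it.
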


It is worth noting in passing that nearvector spaces and the homogeneous mappings of them to themselves
are closely related to questions about nearring matrices over the associated nearfield.
Thus we hope that future work here could shed light on the question raised in the final section of
\cite{wendtMatrix} as to the inverses of units in matrix nearrings over planar nearfields.

\section{The Distributive Elements}
\label{secDist}

In this section we investigate the distributive elements of a planar nearring.
Some examples are well known. A finite field is a planar nearfield and thus a planar nearring,
with all elements being distributive. In \cite{aichingerPlanarRings} (see \S \ref{secNearVecSpaces} above) the structure of
planar rings is completely determined, so we know what happens when $D(N)=N$.
The distributive elements of a nearfield are called the \emph{kern} of the nearfield.

%

Using Sonata \cite{sonata} we found all planar nearrings up to order 15 with nontrivial $D(N)$.
\begin{enumerate}
 \item The fields of order 3,4,5,7,8,9,11,13.
 \item The proper nearfield of order 9 with kern of order 3.
 \item The planar ring of order 9.
 \item An example of order 9. The additive group is $\Z_9$, $\Phi=\{1,-1\}$, $R=\{2,3,5,8\}$, $M=\{3\}$.
 The distributive elements are the zero multipliers $\{0,3,6\}$.
 \item An example of order 15, $\Phi$ of order 2 with generator $g$ acting 
 on $\Z_3 \times \Z_5$ as $(x,y)*g = (-x,-y)$, the orbit $\{0\}\times \Z_5$ zero multipliers. The distributive
 elements are $\Z_3 \times \{0\}$.
\end{enumerate}
We see that 1-3 can be readily explained, but not 4 or 5.
One of the goals of this paper is to understand all these examples in terms of general classes.

We first determine some properties of the orbits that contain distributive elements.

\begin{lemma}
\label{lemmaDistAddClosed}
Let $d\in N$. Then $d \in D(N) \Rightarrow d\Phi^*$ is additively closed.
\end{lemma}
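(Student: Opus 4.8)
The plan is to exploit the defining formula for multiplication in a planar nearring, which already guarantees that \emph{every} single product $d*z$ lands in $d\Phi^*$: by construction $d*z$ equals $0$ (when $z=0$ or $r_z\in M$) or $d\phi_z$ for some $\phi_z\in\Phi$ (otherwise). So the whole task reduces to writing an arbitrary sum of two elements of $d\Phi$ as one such product, and distributivity of $d$ is exactly the tool that permits this.

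First I would dispose of the trivial case $d=0$, where $d\Phi^*=\{0\}$ is additively closed. For $d\neq 0$ note that $d\Phi^*=d\Phi\cup\{0\}$, so, sums in which a summand is $0$ being immediate, it suffices to prove $d\phi+d\psi\in d\Phi^*$ for all $\phi,\psi\in\Phi$. Choose a right identity $r\in R\setminus M$; such an $r$ exists, since otherwise $R=M$, every product in $N$ is $0$, and $\cong$ has a single class, contradicting planarity. In the canonical form $r\phi=r_{r\phi}\phi_{r\phi}$ we have $r_{r\phi}=r\notin M$ and $\phi_{r\phi}=\phi$, so directly from the multiplication formula $d*(r\phi)=d\phi$, and similarly $d*(r\psi)=d\psi$.

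Now distributivity of $d$ gives
\[
d\phi+d\psi \;=\; d*(r\phi)+d*(r\psi) \;=\; d*(r\phi+r\psi),
\]
and, whatever the element $r\phi+r\psi$ turns out to be (zero, a zero multiplier, or otherwise), the right-hand side is either $0$ or of the form $d\chi$ with $\chi\in\Phi$; in either case it lies in $d\Phi^*$. Combining this with the trivial cases shows that $d\Phi^*$ is closed under addition.

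I do not expect a genuine obstacle here; the argument is short once one spots the trick. The only points that need a word of care are checking that a right identity $r\in R\setminus M$ is available (which planarity forces) and unwinding the definition of $*$ to see that $d*(r\phi)=d\phi$ on an orbit representative multiplied by an automorphism. Everything else is a one-line application of the right distributive law satisfied by $d$.
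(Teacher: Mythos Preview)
Your proof is correct and follows essentially the same route as the paper's: both write $d\phi+d\psi$ as a single product $d*z$ via distributivity of $d$ and then observe that any product $d*z$ automatically lies in $d\Phi^*$. Your use of a right identity $r\in R\setminus M$ in place of the paper's $r_d$ is a mild improvement, since the identity $d*(r\phi)=d\phi$ then holds even when $d$ itself is a zero multiplier.
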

\begin{proof}
Suppose $d\Phi^*$ is not additively closed, so that there exist some $\phi_1,\phi_2$ such that $d\phi_1 + d\phi_2 \not\in d\Phi^*$.
Let $r_3\phi_3 = r_d\phi_1 + r_d\phi_2$. 
Then $d*(r_d\phi_1 + r_d\phi_2) = d * r_3\phi_3 = r_d\phi_d\phi_3 \in d\Phi^*$.
However $d*r_d\phi_1 + d*r_d\phi_2 = d*\phi_1 + d*\phi_2 \not \in d\Phi^*$, so $d$ is not distributive, a contradiction.
\end{proof}

\begin{lemma}
\label{lemmaDistMultClosed}
Let $N$ be a planar nearring.
Let $d\in D(N)$ be a non zero multiplier, $d=r_d\phi_d$. 
Then $ \{\phi\in\Phi \vert r_d\phi \in D(N)\} \leq \Phi$ is a subgroup, containing $Z(\Phi)$.
\end{lemma}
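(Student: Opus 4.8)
The plan is to set $S=\{\phi\in\Phi\mid r_d\phi\in D(N)\}$ and to isolate the two features of $D(N)$ that make $S$ a subgroup and force $Z(\Phi)\subseteq S$: multiplicative closure of $D(N)$, and the fact that a central automorphism can be slid through a product. I would first record the purely nearring-theoretic fact that $D(N)$ is closed under $*$: if $d,e\in D(N)$ then $(d*e)*(a+b)=d*(e*(a+b))=d*(e*a+e*b)=d*(e*a)+d*(e*b)=(d*e)*a+(d*e)*b$, using associativity of $*$ and left distributivity of $e$ and then of $d$. Since $d$ is a non zero multiplier, $r_d\notin M$, so for any $\phi,\psi\in\Phi$ the product $(r_d\phi)*(r_d\psi)$ falls into the second case of the multiplication and equals $(r_d\phi)\psi=r_d(\phi\psi)$. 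Hence $\phi,\psi\in S$ implies $r_d(\phi\psi)=(r_d\phi)*(r_d\psi)\in D(N)$, so $S$ is closed under the group operation of $\Phi$; and $S\neq\emptyset$ since $\phi_d\in S$. Thus $S$ is a nonempty subsemigroup of $\Phi$.

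Next I would prove the transfer step: if $\phi\in S$ and $z\in Z(\Phi)$ then $r_d\phi z\in D(N)$. Write $w=r_d\phi\in D(N)$ and check that $x\mapsto (r_d\phi z)*x$ is additive. On non zero multipliers $(r_d\phi z)*x=r_d\phi z\phi_x=r_d\phi\phi_x z$ since $z$ is central, so for non zero multipliers $a,b$ with $a+b$ again a non zero multiplier,
\[
(r_d\phi z)*a+(r_d\phi z)*b=(r_d\phi\phi_a)z+(r_d\phi\phi_b)z=(r_d\phi\phi_a+r_d\phi\phi_b)z=(w\phi_{a+b})z=r_d\phi\phi_{a+b}z=(r_d\phi z)*(a+b),
\]
using that $z$ is a group automorphism of $(N,+)$, that $w\in D(N)$ gives $w\phi_a+w\phi_b=w\phi_{a+b}$, and once more that $z$ is central. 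The remaining cases, in which one of $a,b,a+b$ is $0$ or a zero multiplier, impose conditions on the orbit of $a+b$ (for instance that $a+b$ is a non zero multiplier with $\phi_{a+b}=\phi_a$ when $a$ is a zero multiplier and $b$ is not) which do not involve $\phi$ and are therefore already guaranteed by $w\in D(N)$; one then verifies additivity in each of them by the same bookkeeping, pulling the central $z$ out of the sum. This shows $S\cdot Z(\Phi)=S$, in particular $\phi_d Z(\Phi)\subseteq S$.

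Finally I would combine the two ingredients. A nonempty subsemigroup of a finite group is a subgroup (for $\phi\in S$, $\phi^{-1}=\phi^{\,\mathrm{ord}(\phi)-1}\in S$), so when $\Phi$ is finite — in particular when $N$ is finite — $S\leq\Phi$; and then from $\phi_d\in S$, $\phi_d^{-1}\in S$ and $\phi_d Z(\Phi)\subseteq S$ we obtain $Z(\Phi)=\phi_d^{-1}\bigl(\phi_d Z(\Phi)\bigr)\subseteq S$. (When $\Phi$ is abelian the transfer step alone already gives $S=\phi_d Z(\Phi)=\Phi$.) The step I expect to be the real obstacle, and the only place finiteness genuinely enters above, is closure of $S$ under inverses, equivalently $r_d\phi_d^{-1}\in D(N)$: multiplicative closure only delivers the positive powers $d,d^2,\dots$, and sliding central automorphisms does not by itself produce $\phi_d^{-1}$ unless some power of $\phi_d$ already lies in $Z(\Phi)$, so a fully general argument has to lean on the structure of fixed point free automorphism groups (or restrict to the finite setting relevant to the rest of the paper).
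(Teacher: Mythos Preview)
Your multiplicative closure argument and your ``transfer step'' $S\,Z(\Phi)=S$ are both correct and match the paper in spirit. The genuine gap is exactly where you flag it: the lemma is stated for arbitrary planar nearrings with no finiteness hypothesis, so you are not entitled to conclude that the subsemigroup $S$ is a subgroup from finiteness of $\Phi$, and your derivation of $Z(\Phi)\subseteq S$ depends on already having $\phi_d^{-1}\in S$. Contrary to your guess, the paper does \emph{not} invoke any structural theory of fixed point free automorphism groups here; it proves $r_d\phi_d^{-1}\in D(N)$ by a short direct calculation.

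The trick uses Lemma~\ref{lemmaDistAddClosed}: since $r_d\Phi^*$ is additively closed, $r:=r_d$ acts as a \emph{left} identity on $r_d\Phi^*$ (in addition to being a right identity on all of $N$). First, inserting $r=r\phi_d^{-1}*r\phi_d$ on the right and distributing with $d=r\phi_d\in D(N)$ twice gives
\[
r\phi*(a+b)=r\phi*(r*a+r*b)\qquad\text{for every }\phi\in\Phi.
\]
Second, since $r\phi_d^{-1}*a+r\phi_d^{-1}*b\in r\Phi^*$, left-multiplying by $r=r\phi_d^{-1}*r\phi_d$ fixes it, and distributing the inner $r\phi_d$ yields $r\phi_d^{-1}*(r*a+r*b)$, which by the displayed identity equals $r\phi_d^{-1}*(a+b)$. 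Thus $r\phi_d^{-1}\in D(N)$. The same computation with any $r\psi\in D(N)$ in place of $d$ shows $S$ is inverse-closed in full generality; in particular $1=\phi_d\phi_d^{-1}\in S$, after which your transfer argument with $\phi=1$ (equivalently, the paper's final calculation starting from $r_d\in D(N)$) immediately gives $Z(\Phi)\subseteq S$.
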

\begin{proof}
From  lemma \ref{lemmaDistAddClosed} above, we know that $d\Phi^*$ is additively closed.

Let $a,b\in N$, $\phi\in \Phi$. Let $r=r_d$.
Then we can show the following.
\begin{align}
 r \phi * (a+b) &= r \phi * r \phi_d^{-1} * r\phi_d * (a+b)\\
  &=  r \phi * r \phi_d^{-1} *(r\phi_d * a + r\phi_d*b) \\
  &=  r \phi * r \phi_d^{-1} *(r\phi_d * r*a + r\phi_d*r*b) \\
  &=  r \phi * r \phi_d^{-1} *r\phi_d * (r*a + r*b) \\
  &=  r \phi * (r*a + r*b) 
\end{align}

We use this in the following calculation.
We know that $r_d \Phi^*$ is additively closed and that $r=r_d$ is  a left multiplicative identity in $r_d\Phi$. 
\begin{align}
 r \phi_d^{-1} * a + r \phi_d^{-1} * b &= r * (r \phi_d^{-1} * a + r \phi_d^{-1} * b) \\
   &= r\phi_d^{-1} * r\phi_d *  (r \phi_d^{-1} * a + r \phi_d^{-1} * b) \\
   &= r\phi_d^{-1} *   (r\phi_d *r \phi_d^{-1} * a + r\phi_d *r \phi_d^{-1} * b) \\
   &= r\phi_d^{-1} *   (r * a + r * b) \\
   &= r\phi_d^{-1} *   (a +  b) 
\end{align}
Thus the multiplicative inverse of $d$ in $d\Phi$  is in $D(N) \cap d\Phi$. 
By standard arguments, $D(N) \cap d\Phi$ is multiplicatively closed, thus a group,
so$ \{\phi\in\Phi \vert r_d\phi \in D(N)\} \leq \Phi$ is a subgroup of $\Phi$.

Now we know that $r_d*(a+b)=r_d*a+r_d*b$. 
Let $\phi \in Z(\Phi)$. Then let $a,b\in N$, 
\begin{align}
 r_d\phi *(a+b) &= r_d\phi \phi_{a+b} \\
  &= r_d\phi_{a+b}\phi \\
  &= r_d(a+b)\phi \\
  &= r_d\phi*a + r_d\phi*b
\end{align}
so $Z(\Phi) \leq \{\phi\in\Phi \vert r_d\phi \in D(N)\}$ and we are done.
\end{proof}

\begin{lemma}
\label{lemmaNearfield}
Let $d\in N$ be a non zero multiplier. Then $d \in D(N) \Rightarrow d\Phi^*$ is a planar nearfield.
\end{lemma}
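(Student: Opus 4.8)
The plan is to prove the stronger structural statement that $(d\Phi^*,+,*)$ is a subnearring of $N$ all of whose nonzero elements are multiplicative units, and then to observe that such a nearfield is planar. The essential input is Lemma~\ref{lemmaDistAddClosed}, which already tells us $d\Phi^*=d\Phi\cup\{0\}$ is closed under addition; the distributivity of $d$ will be used once more to finish off the additive subgroup property.

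First I would check that $d\Phi^*$ is an additive subgroup. Only closure under negation is missing. Since $d$ is a non zero multiplier, $r_d\notin M$, so $r_d$ is a right identity and $d*r_d=d$; distributivity of $d$ then gives $0=d*0=d*(r_d+(-r_d))=d+d*(-r_d)$, hence $d*(-r_d)=-d$. As $d\neq 0$, the element $-r_d$ cannot be a zero multiplier (else $d*(-r_d)=0$ and $d=0$), so $d*(-r_d)=d\phi_{-r_d}\in d\Phi$, and therefore $-d\in d\Phi$. Because $d\Phi$ is a $\Phi$-orbit and $\Phi$ acts by additive automorphisms, $-(d\phi)=(-d)\phi\in d\Phi$ for every $\phi$, so $d\Phi^*$ is closed under negation. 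Next, for $a,b\in d\Phi$ the second factor has representative $r_d\notin M$, so $a*b=a\phi_b\in d\Phi$, and $0$ is absorbed; thus $d\Phi^*$ is a subnearring. On it $r_d$ is a two-sided identity, and the $\Phi$-component map $r_d\phi\mapsto\phi$ is a bijection $d\Phi\to\Phi$ taking $*$ to the group operation of $\Phi$, so $\bigl((d\Phi^*)^*,*\bigr)$ is a group. Right distributivity inside $d\Phi^*$ is then free: for $c\in d\Phi$, $(a+b)*c=(a+b)\phi_c=a\phi_c+b\phi_c=a*c+b*c$ since $\phi_c\in\Phi\leq Aut(N)$. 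Hence $(d\Phi^*,+,*)$ is a nearfield. For planarity, note that $\Phi$ is nontrivial for any planar nearring (a trivial $\Phi$ leaves only two equivalent multiplier classes), so $|d\Phi^*|=|\Phi|+1\geq 3$; being a subnearring of the zero symmetric $N$, $d\Phi^*$ is zero symmetric, so the equivalent multiplier relation on it is equality (apply $x=r_d$) and there are at least three classes; and unique solvability of $x*a=x*b+c$ in a nearfield reduces to $-id+\phi$ being a bijection of $d\Phi^*$ for every non-identity $\phi\in\Phi$, with injectivity inherited from $N$ and fixed point freeness. For finite $N$ this last point is automatic, and in that case one may simply invoke the cited fact that a finite nearfield other than $\Z_2$ is planar.

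I expect the main obstacle to be precisely that final step — upgrading "nearfield" to "planar nearfield": the subnearring bookkeeping is routine once one spots the identity $0=d*(r_d+(-r_d))$ that returns $-d$ to the orbit, but verifying the unique-solution axiom amounts to the surjectivity of $-id+\phi$ onto $d\Phi^*$, which is trivial in the finite case yet genuinely delicate in the possibly infinite setting, so that is where I would concentrate the work (or else restrict attention to finite $N$ and cite planarity of finite nearfields).
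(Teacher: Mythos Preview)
Your route to ``$d\Phi^*$ is a subnearring and a nearfield'' is correct and in one respect more scrupulous than the paper: the paper simply quotes Lemma~\ref{lemmaDistAddClosed} and declares $d\Phi^*$ a subnearring, whereas you actually justify closure under negation via $0=d*(r_d+(-r_d))=d+d*(-r_d)$, which is the right way to see $-d\in d\Phi$ without assuming $-id\in\Phi$.

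Where you diverge from the paper is precisely the point you flag as the obstacle. The paper does \emph{not} argue planarity by analysing $-id+\phi$ on $d\Phi^*$ or by appealing to finiteness; it instead projects solutions from $N$ back into $d\Phi^*$ using the fact that $r_d\in D(N)$ (this follows from Lemma~\ref{lemmaDistMultClosed}: the identity of $\Phi$ lies in the subgroup $\{\phi:r_d\phi\in D(N)\}$). Given $a\not\cong b$ and $c$ in $d\Phi^*$, planarity of the ambient $N$ produces a unique $x\in N$ with $x*a=x*b+c$; then
\[
(r_d*x)*a=r_d*(x*a)=r_d*(x*b+c)=(r_d*x)*b+r_d*c=(r_d*x)*b+c,
\]
so $r_d*x$ is also a solution, whence $x=r_d*x\in d\Phi^*$. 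This one-line trick settles the unique-solution axiom inside $d\Phi^*$ uniformly, with no finiteness hypothesis and no surjectivity argument for $-id+\phi$; it is the key idea your proposal is missing.
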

\begin{proof}
From lemma \ref{lemmaDistAddClosed} above, we know that $d\Phi^*$ is additively closed.
Thus $d\Phi^*$ is additively and multiplicatively closed, forming a planar subnearring. 
We note that there is precisely one orbit on this nearring, so the planar nearring must be a nearfield
with $r_d$ the multiplicative identity.

Let $n,c\in D(N)$ be arbitrary, then there exists a unique $x\in N$ such that $x-x*n=c$ by
the planarity of $N$, but this might not be in $d\Phi^*$. However $r_d*x - r_d*x*n = r_d*(x-x*n) = r_d*c=c$
so $r_d*x$ is also a solution to the equation. This solution is unique so $x=r_d*x\in d\Phi^*$ and
$d\Phi^*$ is a planar nearfield.
\end{proof}

Similarly we know that, even in the case that there are distributive elements that are zero multipliers,
additive closure of the orbit $d\Phi^*$ allows us to define a multiplication $d\phi_1 \circ d\phi_2 = d(\phi_1\phi_2)$
that gives us $(d\Phi^*,+,\circ)$ a planar nearfield.

Thus we know a lot more about the forms of $\Phi$ that can emerge, as not all fixed point free 
automorphism groups arise as the multiplicative group of a nearfield.


\begin{lemma}
\label{lemmaEquivMult}
 Let $N$ be a planar nearring with nontrivial distributive elements.
 Then for every $m\in M\Phi^*$, $a\in N\setminus M\Phi^*$, $\phi_{m+a} = \phi_a$.
\end{lemma}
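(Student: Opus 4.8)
The plan is to work with a single nonzero distributive element $d$, which exists by hypothesis, and to feed the sum $m+a$ through the one distributive law that $d$ does satisfy. The slogan is that left multiplication by $d$ collapses the zero‑multiplier summand $m$ while detecting the $\Phi$‑coordinate of $a$, and the same detection must then agree with the $\Phi$‑coordinate of $m+a$.

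Concretely, I would first write $d*(m+a) = d*m + d*a$, which is legitimate because $d\in D(N)$. Since $m\in M\Phi^*$ is a zero multiplier we have $d*m = 0$, so $d*(m+a) = d*a$. Since $a\notin M\Phi^*$ we have $r_a\notin M$, hence $d*a = d\phi_a$, and because $d\neq 0$ and $\phi_a$ is an automorphism, $d*a = d\phi_a\neq 0$. In particular $d*(m+a)\neq 0$. This already forces $m+a\notin M\Phi^*$: if $m+a=0$ then $d*(m+a)=0$, and if $m+a$ is a nonzero zero multiplier then $r_{m+a}\in M$ and again $d*(m+a)=0$, either way a contradiction. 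So $\phi_{m+a}$ is well defined and $d*(m+a)=d\phi_{m+a}$. Comparing the two evaluations gives $d\phi_{m+a}=d\phi_a$, so $\phi_{m+a}\phi_a^{-1}$ fixes the nonzero element $d$; fixed point freeness of $\Phi$ then yields $\phi_{m+a}\phi_a^{-1}=id$, i.e.\ $\phi_{m+a}=\phi_a$.

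The one thing to be careful about is the temptation to pick $d$ in a right‑identity orbit: the hypothesis only supplies a nonzero distributive element, and (as item~4 of the list of small examples shows) \emph{all} distributive elements may be zero multipliers. This causes no trouble, because the multiplication formula gives $d*b = d\phi_b$ for every $b$ with $r_b\notin M$ regardless of the orbit of $d$, and $d\phi_b\neq 0$ as soon as $d\neq 0$; the argument above therefore never uses that $d$ is a non‑zero‑multiplier, only that it is a nonzero distributive element. Consequently I do not anticipate a genuine obstacle here — the proof is short once one resists that temptation.
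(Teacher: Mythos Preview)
Your proof is correct and follows essentially the same route as the paper: compute $d*(m+a)$ via distributivity, use $d*m=0$, and cancel to get $\phi_{m+a}=\phi_a$. If anything you are more careful than the paper, which writes $d*(m+a)=r_d\phi_d\phi_{m+a}$ without first verifying that $m+a\notin M\Phi^*$; your argument that $d*(m+a)=d\phi_a\neq 0$ forces this is a genuine (if small) improvement.
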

\begin{proof}
We proceed by calculation. Let $0\neq d \in D(N)$.
\begin{align}
 r_d\phi_d\phi_{m+a} &= d * (m+a) = d*m+d*a \\
 &= 0+d*a = d*a = r_d\phi_d\phi_a\\
 \Rightarrow \phi_d\phi_{m+a} &= \phi_d\phi_a \\
 \Rightarrow \phi_{m+a} &= \phi_a
\end{align}
By symmetry, $\phi_{a+m} = \phi_a$ as well.
\end{proof}


\begin{lemma}
\label{lemmaHomo}
 Let $N$ be a planar nearring. If $D(N)$ is nontrivial, then the zero multipliers form a nearring ideal.
\end{lemma}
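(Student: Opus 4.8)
The plan is to show that the set of zero multipliers $Z := M\Phi^*$ is a normal additive subgroup and is closed under multiplication by $N$ on both sides. Multiplicative absorption is already essentially in hand: by the remarks in the Background section, $a*n = 0$ for all $a$ whenever $n \in Z$, so $n*b$ and $b*n$ land in $Z$ for any $b$ (for $n*b$, note $n*b = 0$ if $b$ or $r_b$ is a zero multiplier, and otherwise $n*b = n\phi_b \in n\Phi \subseteq Z$ since $Z$ is a union of $\Phi$-orbits together with $0$; for $b*n$ we get $0 \in Z$ directly). So the ideal conditions reduce, modulo routine checking, to showing $(Z,+)$ is a normal subgroup of $(N,+)$.

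First I would establish that $Z$ is additively closed. Take $m_1, m_2 \in Z$; I want $m_1 + m_2 \in Z$. Pick a nonzero $d \in D(N)$; if $d$ itself is a zero multiplier I can still use the nearfield multiplication $\circ$ on $d\Phi^*$ from the remark after Lemma~\ref{lemmaNearfield}, but the cleanest route uses the left-identity-like behaviour. The key tool is Lemma~\ref{lemmaEquivMult}: for $m \in Z$ and $a \notin Z$ we have $\phi_{m+a} = \phi_a$, i.e. $m+a$ and $a$ are equivalent multipliers. Now suppose toward a contradiction that $m_1 + m_2 = a \notin Z$. Then $\phi_{m_1 + m_2} = \phi_{m_2}$ would force $m_2$ and $m_1+m_2$ to be equivalent multipliers while $m_2 \in Z$ has $x*m_2 = 0$ for all $x$ and $m_1+m_2 \notin Z$ has $x*(m_1+m_2) \neq 0$ for some $x$ — a contradiction. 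Hence $m_1 + m_2 \in Z$. (One must be slightly careful if $m_1 + m_2 = 0$, but $0 \in Z$ by definition, so that case is fine.)

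Next, additive inverses: since $\Phi$ contains no requirement that $-\mathrm{id} \in \Phi$, I cannot immediately say $-m \in m\Phi$, so I argue as follows. If $m \in Z$ then $-m$ satisfies $x*(-m) = ?$; using right distributivity, $0 = x*0 = x*(m + (-m)) = x*m + x*(-m) = 0 + x*(-m)$, so $x*(-m) = 0$ for all $x$, whence $-m \in Z$ (it is $0$ or has $r_{-m} \in M$ by the characterization of zero multipliers). Combined with closure, $(Z,+)$ is a subgroup. For normality, take $m \in Z$, $n \in N$; I need $n + m - n \in Z$. Again test against multipliers: for any $x$, $x*(n+m-n)$... this does not distribute directly since $x$ multiplies on the left, so instead I would use Lemma~\ref{lemmaEquivMult} once more — if $n + m - n \notin Z$ then setting $a = n + (-n)$-type manipulations, or more directly, observe that $\phi_{n+m} = \phi_n$ and then $\phi_{(n+m)+(-n)} = \phi_{n+(-n)} = \phi_0$, which is not defined; the honest approach is: $n + m - n \in Z$ iff it is $0$ or $r_{n+m-n} \in M$, and since $n+m$ and $n$ are equivalent multipliers (Lemma~\ref{lemmaEquivMult}), for all $x$ we have $x*(n+m-n) \circ$-computations show $x*(n+m) = x*n$, but I actually want left multiplication by $n+m-n$; so instead I verify $y*(n+m-n) = y*(n-n) = y*0 = 0$ is wrong — rather, I use that $Z$ being a union of orbits means it suffices to show $n + m - n$ is again a zero multiplier, which follows because the map $x \mapsto n + x - n$ permutes $N$ and, by Lemma~\ref{lemmaEquivMult} applied symmetrically ($\phi_{m+a} = \phi_a = \phi_{a+m}$), one checks it preserves the equivalent-multiplier class structure, forcing $n+m-n$ into $Z$.

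The main obstacle I anticipate is exactly this normality step: right distributivity gives good control over $x*(\text{sum})$ but not over $(\text{sum})*x$, and "being a zero multiplier" is a condition about left multiplication by the element. The clean way around it is to lean entirely on Lemma~\ref{lemmaEquivMult} — which already encodes that adding a zero multiplier does not change the equivalent-multiplier class — and to note that conjugation $x \mapsto n+x-n$ is an automorphism of $(N,+)$ that must therefore send zero-multiplier orbits to zero-multiplier orbits, since the set $Z$ is intrinsically characterized (as $\{n : x*n = 0\ \forall x\} \cup \{0\}$) in a way that is... not obviously conjugation-invariant, so the real content is pushing through the multiplier bookkeeping. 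Once $(Z,+) \trianglelefteq (N,+)$ is established, the left-ideal condition $n*m - n*(m+i) \in Z$ for $i \in Z$ follows since $m + i$ and $m$ are equivalent multipliers by Lemma~\ref{lemmaEquivMult} (so $n*m = n*(m+i)$ when $m \notin Z$, giving $0$; and when $m \in Z$ both terms are $0$ up to the class, landing in $Z$), and the right-ideal condition $i*n \in Z$ is the absorption already noted. Hence $Z$ is a two-sided ideal.
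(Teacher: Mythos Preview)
Your handling of the right- and left-ideal conditions at the end is essentially the paper's argument and is fine. The real problem is the normal-subgroup step, where you have three gaps that all stem from missing one idea.

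First, your inverse argument is invalid: you write $x*(m+(-m)) = x*m + x*(-m)$ ``by right distributivity'', but that is \emph{left} distributivity of $x$, which you do not have for arbitrary $x$. Second, your closure argument misapplies Lemma~\ref{lemmaEquivMult}: that lemma requires the non-zero-multiplier summand $a \in N\setminus M\Phi^*$, but you feed it $m_2 \in Z$, so the conclusion $\phi_{m_1+m_2}=\phi_{m_2}$ is unjustified. Third, as you yourself flag, the normality argument never lands: conjugation-invariance of $Z$ is not obvious from the intrinsic description, and Lemma~\ref{lemmaEquivMult} alone does not control $n+m-n$.

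The missing idea, which the paper uses, dissolves all three problems at once: for a fixed nonzero $d\in D(N)$ the map $\rho:n\mapsto d*n$ is an \emph{additive group homomorphism} (precisely because $d$ is distributive), and its kernel is exactly $Z$ (since $d*n=0$ iff $n=0$ or $r_n\in M$). Kernels of group homomorphisms are normal subgroups, so closure, inverses, and normality come for free. After that, the right- and left-ideal checks proceed exactly as you wrote, using Lemma~\ref{lemmaEquivMult} for the left-ideal condition. Note that your inverse and closure arguments become correct if you simply replace the arbitrary $x$ by the single distributive element $d$; that is the whole point of the hypothesis $D(N)\neq\{0\}$.
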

\begin{proof}
Let $0\neq d \in D(N)$.
Let $K = M\Phi^*$, the zero multipliers.
The mapping $\rho : n\mapsto d*n$ is an additive homomorphism.
Then $n\in \ker \rho$ iff $d*n=0 $ iff $n\in K$ so the zero multipliers form an additively normal group.

Let $m,n \in N$, $k\in K$.
If $n$ is a zero multiplier, then $k*n=0$ so $k*n \in K$.
If $n$ is not a zero multiplier, then $k*n = r_k\phi_k\phi_n \in K$, so $K$ is a right ideal.
If $n+k \in K$, then since $K$ is a subgroup, $n \in K$ so  $m*n - m*(n+k) = 0-0 \in K$.
If $n+k \not\in K$, then remember that by the lemma above, $\phi_{n+k}=\phi_n$. Then
\begin{align}
 m*n - m*(n+k) &= m*n - r_m\phi_m\phi_{n+k}\\
  &= m*n - r_m\phi_m\phi_n \\
  &= m*n-m*n = 0 \in K
\end{align}
so we have a left ideal and thus an ideal.
\end{proof}

Note that, in general, the mapping $\rho$ is not a nearring homomorphism, unless $\phi_d$ is the identity.
This can only be guaranteed to be the case when $D(N)$ contains non zero multipliers, by lemma  \ref{lemmaDistMultClosed}.

This implies that $(N,+)$ is an extension of  $(\rho(N),+)$ by $(K,+)$.
By lemma \ref{lemmaNearfield} and the comments afterwards, we know that $(\rho(N),+)$ is
the additive group of a nearfield, thus abelian and, in the finite case, elementary abelian.
When all distributive elements are zero multipliers, we do not necessarily have that the extension splits.
If we have a non zero multiplier distributive element, then we get a clear result.

\begin{thm}
\label{thmsemidirect}
Let $N$ be a planar nearring with automorphism group $\Phi$.
Let $d\in D(N)$ be a non zero multiplier. 
Then  there is a subnearfield $F\leq N$ with $F^*\cong \Phi$ 
and an additive group $K$ with $\Phi$ a group of fixed point free automorphisms, such that
$N \cong K \rtimes F$ as an additive group and
\begin{align}
  (a,b) * (c,d) &= \begin{cases}
                   0 & d=0 \\
                   (a\phi_d,b\phi_d) & \mbox{otherwise}
                  \end{cases}
\end{align}
such that $(N,+,*) \cong (K \rtimes F,+,*)$.
The zero multipliers form an ideal $K \times \{0\}$.
\end{thm}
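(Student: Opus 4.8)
The plan is to build the claimed semidirect decomposition explicitly, using the nearfield orbit supplied by Lemma \ref{lemmaNearfield} as the complement $F$ and the zero multiplier ideal supplied by Lemma \ref{lemmaHomo} as $K$. First I would set $F := d\Phi^*$, which by Lemma \ref{lemmaNearfield} is a planar subnearfield of $N$ with multiplicative identity $e := r_d$ and $F^* \cong \Phi$ (the isomorphism sending $\phi$ to $e\phi$, well-defined since $\Phi$ acts regularly on the orbit). Set $K := M\Phi^*$, which by Lemma \ref{lemmaHomo} is an ideal, hence an additively normal subgroup; by Lemma \ref{lemmaNearfield} and the remarks after Lemma \ref{lemmaHomo}, $(N/K,+) \cong (F,+)$ is abelian. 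I would then check $K \cap F = \{0\}$: a nonzero element of $F$ is $e\phi$, which is not a zero multiplier because $e = r_d \in R \setminus M$, so $e\phi \notin M\Phi^*$. Counting (or, in the infinite case, the exact sequence $0 \to K \to N \to N/K \to 0$ together with the section $F \to N$) gives $N = K + F$ as a set, so every $n \in N$ is uniquely $n = k + f$ with $k \in K$, $f \in F$, and $N \cong K \rtimes F$ additively, with $\Phi \cong F^*$ acting on $K$: the action is $k \mapsto e\phi * k$, fixed point free because $\rho_\phi : k \mapsto e\phi * k$ restricted to $K$ has the same kernel structure as the global map and $\Phi$ is fixed point free on $N$.

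Next I would verify the multiplication formula. Write $n = k + f$ with $k \in K$, $f \in F$; I claim $\phi_n = \phi_f$ when $f \neq 0$, and $n \in K\Phi^* \cup\{0\}$, i.e. $n$ is a zero multiplier, when $f = 0$. The second case is clear since $K$ is an ideal. For the first, Lemma \ref{lemmaEquivMult} gives $\phi_{k+f} = \phi_f$ directly (taking $m = k$, $a = f$), provided $f \notin M\Phi^*$, which holds as above. Then for $n_1 = k_1 + f_1$, $n_2 = k_2 + f_2$, I compute $n_1 * n_2$. If $f_2 = 0$ then $n_2$ is a zero multiplier, so $r_{n_2} \in M$ and $n_1 * n_2 = 0$. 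If $f_2 \neq 0$, then $\phi_{n_2} = \phi_{f_2}$, so $n_1 * n_2 = n_1 \phi_{f_2} = (k_1 + f_1)\phi_{f_2} = k_1\phi_{f_2} + f_1\phi_{f_2}$; here $k_1 \phi_{f_2} \in K$ (ideal), and $f_1 \phi_{f_2} = f_1 * f_2$ computed inside the nearfield $F$ equals $e\phi_{f_1}\phi_{f_2}$. Identifying $f$ with $\phi_f \in \Phi$ and noting $d = f_2$ corresponds to $\phi_d = \phi_{f_2}$, this is exactly $(a\phi_d, b\phi_d)$ in the coordinates $(a,b) = (k_1, f_1)$, and $0$ when the second coordinate $d = f_2$ is zero. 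Finally the zero multipliers are precisely $K\Phi^* \cup \{0\} = K$ (since $K$ is a $\Phi$-invariant subgroup containing $M$ and $0$), which in coordinates is $K \times \{0\}$, and this is the ideal from Lemma \ref{lemmaHomo}.

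The main obstacle I anticipate is the additive splitting and, relatedly, making the argument work uniformly in the infinite case: for finite $N$ a cardinality count $|K|\cdot|F| = |K|\cdot|F^*\cup\{0\}|$ against $|N/K| = |F|$ suffices once $K \cap F = \{0\}$ is known, but in general I need the short exact sequence $0 \to K \hookrightarrow N \twoheadrightarrow N/K \to 0$ to split via a section whose image is exactly $F$ — this follows because the composite $F \hookrightarrow N \twoheadrightarrow N/K$ is an additive isomorphism (injective since $K \cap F = 0$, surjective since $|N/K|$ is "filled" by $F$, or more carefully since $N/K \cong F$ as groups and the map is injective between groups that I must argue are equal — easiest by first establishing $N = K + F$ from planarity). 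A second, smaller subtlety is confirming that the $\Phi$-action on $K$ obtained from the abstract semidirect product agrees with the one induced by conjugation/multiplication inside $N$, i.e. that $(k+f) = k' + f$ forces $k' = k\phi$-type relations consistently; this is bookkeeping once Lemma \ref{lemmaEquivMult} and the ideal property of $K$ are in hand. Everything else is routine verification of the nearring axioms for the displayed product against the derived structure.
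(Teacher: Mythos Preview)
Your approach matches the paper's almost exactly: take $F = d\Phi^*$ (this is the paper's $\rho(N)$), take $K$ to be the zero multipliers, invoke Lemma~\ref{lemmaEquivMult} to get $\phi_{k+f}=\phi_f$, and read off the product. The one place you hesitate --- the additive splitting in the infinite case --- is precisely where the paper has a cleaner device than a cardinality count. The map $\rho:n\mapsto d*n$ from the proof of Lemma~\ref{lemmaHomo} is an additive endomorphism of $N$ with $\ker\rho=K$ and image $F$, and its restriction $\rho|_F$ is a bijection of $F$ (it is left multiplication by the unit $d$ inside the nearfield $F$, sending $r_d\phi$ to $r_d\phi_d\phi$). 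Hence for any $n\in N$, setting $f:=(\rho|_F)^{-1}\bigl(\rho(n)\bigr)\in F$ gives $\rho(n-f)=0$, so $n-f\in K$ and $N=K+F$; together with $K\cap F=\{0\}$ this yields the split extension without any finiteness assumption. Your second worry is not an issue: the additive semidirect product uses the conjugation action $k\mapsto f+k-f$ of $F$ on $K$, while the displayed multiplication uses the original $\Phi$-action on $K$; these are two independent pieces of data and the paper keeps them separate.
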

\begin{proof}
By the previous lemma, we know that $(N,+)$ is an extension of $(K,+)$ by $(\rho(N),+)$.
By lemma \ref{lemmaNearfield} we know that $\rho(N)$ is a nearfield, let $F=\rho(N)$.
Because $(F,+)$ is a subgroup of $N$ that is fixed by $\rho$, the extension splits,
so $(N,+) \cong (K,+) \rtimes (F,+)$.
Let $k\in K$, $f\in F$. Writing $k^f = f+k-f$ we know that for all
$k_1,k_2\in K$, $f_1,f_2\in F$, $(k_1,f_1) + (k_2,f_2) = (k_1 + k_2^{f_1},f_1+f_2)$.

We can write each element of $N$ as $(k,f) = (k,0)+(0,f) \in M\Phi^* + F$ so by lemma \ref{lemmaEquivMult}
we know that $\phi_{(k,f)} = \phi_{(0,f)}$. We write $\phi_f$ for $\phi_{(0,f)}$.
Then we can write the multiplication on $K \rtimes F$ as above.
The representatives for $K\rtimes F$   are $\{(k,1) \vert k\in K\} \cup \{(m,0) \vert m\in M\}$.
Since the additive groups are isomorphic as $\Phi$-groups and the representatives 
are matched by the isomorphism, we know that the resulting planar nearrings are isomorphic.
\end{proof}


We see that the example on additive group $(\Z_9,+)$ falls outside this theorem.
The distributive elements lie within the zero multipliers and the additive group is not a semidirect product of the
zero multipliers with anything.

%
%

We can create a family of examples of planar nearrings with $D(N)$ lying within the
zero multipliers, based upon the example on page 49 of \cite{clay}.
These examples do not split.

\begin{example}
 Let $p$ be an odd prime,  $N=\Z_{p^2}$ the cyclic group of order $p^2$. 
There is a cyclic subgroup $\Phi$ of the multiplicative semigroup of order $p-1$.
One of the orbits of this automorphism group is $p\Z_{p^2}$. 
These are our zero multipliers, this orbit has representative $p\in \Z_{p^2}$. 
We choose the rest of our representatives to be a coset of $p\Z_{p^2}$.
Then the resulting planar nearring has $D(N) = p\Z_{p^2}$, all zero multipliers.
\end{example}

We know (e.g.\ \cite{mayrthesis}) that the additive group of a finite planar nearring is nilpotent and thus a
direct sum of $p$-groups.
Thus  a finite planar nearring is a finite direct sum of planar nearrings of prime power order.
Thus by lemma \ref{lemmaNearfield} at most one of these summands has a non zero multiplier distributive element.
If one summand has  such an element, then lemma \ref{lemmaHomo} indicates that we have a trivial multiplication of all
summands other than the one with a non zero multiplier distributive element.
We have shown the following.
\begin{cor}
 Let $N$ be a finite planar nearring with nontrivial distributive elements.
 Let $\Phi$ be the multiplicative group associated to $N$, $R$ the representatives and $M$ the zero multipliers.
 Then $(N,+)$ is the direct sum of finitely many p-subgroups $N=N_1 \oplus\dots\oplus N_k$, 
 only one of which has a nontrivial multiplication,
 so $R\setminus M \subseteq \{0\} \oplus \dots \oplus\{0\}\oplus N_i\oplus \{0\}\oplus  \dots \oplus \{0\}$ for some $i$.
\end{cor}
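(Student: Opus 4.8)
The plan is to combine the nilpotency of $(N,+)$ with the strong constraint, established in this section, that a nonzero distributive element forces its orbit to be (essentially) a planar nearfield, and then to feed this into Lemma~\ref{lemmaHomo}.

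I would start by fixing a nonzero $d\in D(N)$ and recalling what we know about $d\Phi^{*}$. By Lemma~\ref{lemmaDistAddClosed} it is additively closed; being a finite subset of $(N,+)$ containing $0$, it is therefore an additive subgroup of order $|\Phi|+1$. By Lemma~\ref{lemmaNearfield} (if $d$ is not a zero multiplier) or by the remark immediately after it (if $d$ is a zero multiplier), $(d\Phi^{*},+,\circ)$ is a planar nearfield, and a finite planar nearfield has prime-power order, so $|\Phi|+1=p^{a}$ for some prime $p$. Now, by \cite{mayrthesis}, $(N,+)$ is nilpotent, hence $N=N_{1}\oplus\dots\oplus N_{k}$ where $N_{j}$ is the Sylow $p_{j}$-subgroup for the distinct primes $p_{1},\dots,p_{k}$ dividing $|N|$; each $N_{j}$ is characteristic in $(N,+)$, hence $\Phi$-invariant, hence a subnearring of $N$. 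Since $d\Phi^{*}$ is a subgroup of order $p^{a}$, it lies inside the Sylow $p$-subgroup; let $N_{i}$ be that summand, so that $d\in N_{i}$.

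Next I would localise the multiplication using Lemma~\ref{lemmaHomo}. Since $D(N)$ is nontrivial, the zero multipliers $K=M\Phi^{*}$ form an ideal and $\rho\colon n\mapsto d*n$ is an additive homomorphism with $\ker\rho=K$ and image $d*N=d\Phi^{*}$ (as $n$ ranges over the non-zero-multipliers, $\phi_{n}$ ranges over all of $\Phi$, and $d*n=0$ otherwise). Thus $N/K\cong d\Phi^{*}$ is a $p$-group, so for every $j\ne i$ the $p_{j}$-group $N_{j}$ maps trivially into $N/K$, i.e.\ $N_{j}\subseteq K$; hence every element of $N_{j}$ is a zero multiplier of $N$ and the multiplication on $N_{j}$ is identically zero. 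Because $N$ is planar it is not the zero nearring, so $N\not\subseteq K$; as $N_{j}\subseteq K$ for all $j\ne i$, this forces $N_{i}\not\subseteq K$, and then for any $0\ne a\in N_{i}$ and any $b\in N_{i}\setminus K$ we get $a*b=a\phi_{b}\ne0$. So $N_{i}$ is the unique summand carrying a nontrivial multiplication.

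It then remains to place $R\setminus M$ — the right identities of $N$ — inside $N_{i}$: a right identity is not a zero multiplier, hence lies outside $K$, and since all the other summands are zero-multiplied (so their contribution sits inside $K$) one argues that a representative of a non-zero-multiplier orbit cannot have a nonzero component in any $N_{j}$ with $j\ne i$. I expect this last step, together with keeping the Sylow decomposition of $(N,+)$ compatible with the data $(\Phi,R,M)$, to be the main obstacle; the zero-multiplier case is the delicate one, since there $\rho$ is not multiplicative and Theorem~\ref{thmsemidirect} is unavailable, so the prime $p$ has to be extracted purely from the auxiliary nearfield structure on $d\Phi^{*}$ furnished by the remark following Lemma~\ref{lemmaNearfield}.
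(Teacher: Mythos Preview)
Your argument mirrors the paper's: the paper also invokes nilpotency of $(N,+)$ (citing \cite{mayrthesis}) to obtain the Sylow decomposition, then uses Lemma~\ref{lemmaNearfield} to pin down a unique prime via the nearfield $d\Phi^*$, and finally Lemma~\ref{lemmaHomo} to force the remaining Sylow summands into the zero multipliers, whence their internal multiplication is trivial. Your write-up is in fact more careful than the paper's sketch: you explicitly cover the case in which every nonzero $d\in D(N)$ is a zero multiplier by appealing to the remark after Lemma~\ref{lemmaNearfield} to get the auxiliary nearfield structure on $d\Phi^*$, whereas the paper's paragraph argues only under the tacit hypothesis that some summand contains a non-zero-multiplier distributive element.

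You are also right to isolate the inclusion $R\setminus M\subseteq N_i$ as the real obstacle; the paper's argument does not address it, and as literally stated it is false. In the order-$15$ example ($N=\Z_3\times\Z_5$, $\Phi$ of order $2$, zero multipliers $\{0\}\times\Z_5$) the five non-zero-multiplier orbits are the pairs $\{(1,b),(2,-b)\}$ with $b\in\Z_5$, and for $b\ne 0$ neither element of such an orbit lies in $\Z_3\times\{0\}=N_i$; since $|R\setminus M|=5>3=|N_i|$, no choice of representatives can satisfy the inclusion. What your argument via $N_j\subseteq K$ for $j\ne i$ \emph{does} establish, through Lemma~\ref{lemmaEquivMult}, is that every $r\in R\setminus M$ is an equivalent multiplier to its $N_i$-component, so the multiplication on $N$ is determined entirely by the projection to $N_i$; that is presumably the intended content of the final clause.
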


In the example of order 15, we have the $\Z_3$ as the nearfield with automorphism group of order 2 and $\Z_5$ having the same automorphism group generated by $(1\,4)(2\,3)$ acting on it. 
Then $N=\Z_3 \times \Z_5$ with $\{0\}\times \Z_5$ forming the zero multipliers in the nearring.

Thus we have shown all of our small examples of planar nearrings with nontrivial distributive elements fall into 
larger classes of examples.

\section{Some applications}
\label{secApplications}

In this section we look at the way that these results can be contextualized in relation to
other similar results.

%

One of the strong applications of planar nearrings is in the construction of BIBDs.
The blocks of the BIBD are $\{a\Phi^*+b\vert 0\neq a \in N, b \in N\}$ and 
the basic blocks are $\{a\Phi^*\vert 0\neq a \in N\}$.
The following result shows that the construction used  in \cite{clay72}
is the only way that all basic blocks can be subgroups.

\begin{lemma}
\label{lemmaAllOrbitsAddClosed}
 Let $N$ be a finite abelian planar nearring with more than one orbit, 
 in which all orbits $a\Phi^*$ are additively closed.
 Then $N$ is a  vector space over a subfield of $N$.
\end{lemma}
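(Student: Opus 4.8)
The plan is to pin down the additive structure first, then force $\Phi$ to be cyclic, and finally read off the vector space over a subfield.

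\emph{Step 1: additive structure.} Since $N$ is finite and each orbit $a\Phi^*$ is additively closed, each $a\Phi^*$ is a subgroup; moreover it becomes a planar nearfield under $r_a\phi_1\circ r_a\phi_2:=r_a(\phi_1\phi_2)$, additive closure being exactly what makes $\circ$ right distributive (the construction in the remark after Lemma~\ref{lemmaNearfield}), so $(a\Phi^*,+)$ is elementary abelian of order $q:=|\Phi|+1$; write $q=p^k$. As every nonzero element of $N$ lies in some orbit, $(N,+)$ has exponent $p$, hence $(N,+)\cong\mathbb{F}_p^{\,\ell}$ for some $\ell$, every orbit-closure is a $k$-dimensional subspace, and the orbit-closures partition $N$.

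\emph{Step 2: $\Phi$ is cyclic (the crux).} Because $N$ has more than one orbit, pick distinct orbit-closures $C_0=a_0\Phi^*$ with $a_0\in R\setminus M$ ($R\setminus M\neq\emptyset$, since otherwise $a*b=0$ for all $a,b$ and $\cong$ would have one class) and $C_1=b_0\Phi^*$. The subspace $U:=C_0\oplus C_1\cong\mathbb{F}_p^{\,2k}$ is $\Phi$-invariant, hence a union of orbit-closures; these form a spread of $U$ with $q+1$ components, each fixed setwise by $\Phi$ and carrying a regular $\Phi$-action on its nonzero points. Now $C_0$ is a faithful simple $\mathbb{F}_p[\Phi]$-module — a nonzero submodule contains a whole $\Phi$-orbit, which spans $C_0$ — so by Schur's lemma and Wedderburn's theorem $E:=\mathrm{End}_{\mathbb{F}_p[\Phi]}(C_0)$ is a finite field $\mathbb{F}_{p^d}$ with $d\mid k$, over which $C_0$ has dimension $k/d$ and on which $\Phi$ acts $E$-linearly. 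Each component $C\neq C_0,C_1$ is the graph $\{(x,x\mu_C):x\in C_0\}$ of an invertible linear $\mu_C\colon C_0\to C_1$, and $\Phi$-invariance of $C$, $C_0$, $C_1$ gives $\mu_C(\phi|_{C_1})=(\phi|_{C_0})\mu_C$ for all $\phi\in\Phi$; comparing two components then shows that for a fixed reference component $C'$ the composite $\mu_C\mu_{C'}^{-1}$ is a self-map of $C_0$ centralizing the $\Phi$-action, hence lies in $E^{\times}$. As $C\mapsto\mu_C\mu_{C'}^{-1}$ is injective on the $q-1$ remaining components, $|E^{\times}|\ge q-1=p^k-1$, forcing $d=k$. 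Thus $C_0$ is one-dimensional over $E\cong\mathbb{F}_{p^k}$, so $\Phi\hookrightarrow\mathrm{GL}_E(C_0)=\mathbb{F}_{p^k}^{\times}$, and $|\Phi|=p^k-1$ forces $\Phi=\mathbb{F}_{p^k}^{\times}$; in particular $\Phi$ is abelian (indeed cyclic).

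\emph{Step 3: the subfield and the vector space.} With $\Phi$ abelian, the nearfield $F:=C_0=a_0\Phi^*$ is in fact a field $\cong\mathbb{F}_{p^k}$: identifying $a_0\phi$ with the $\mathbb{F}_{p^k}$-scalar by which $\phi$ acts on $C_0$ turns both its addition and its multiplication into the field operations. So $F$ is a subfield of $N$ with identity $a_0$, and I set $v\odot f:=v*f$ for $v\in N$, $f\in F$ (as $r_f=a_0\notin M$ this is $v\phi_f$, and $v*0=0$). All vector space axioms are immediate from the nearring laws and $F$ being a subnearfield except left distributivity over scalars, which reduces to the claim: if $a_0\psi_1+a_0\psi_2=a_0\psi_3$ in $F$ (allowing $\psi_3=0$), then $v\psi_1+v\psi_2=v\psi_3$ for every $v\in N$. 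For $v\in C_0$ this follows by applying to the identity in $F$ the automorphism underlying $v$ and using that $\Phi$ is abelian; for $v=b_0$ in another orbit one examines the additively closed orbit-closure containing $a_0+b_0$, expands $(a_0+b_0)\psi_1+(a_0+b_0)\psi_2$, and uses directness of $C_0\oplus b_0\Phi^*$ together with regularity of the $\Phi$-action; the general case $v=b_0\phi$ then reduces to $v=b_0$ as for $C_0$. This shows $(N,+)$ is a vector space over the subfield $F$, as claimed. (Taking $\pi\colon N\to F$ with $\pi(b)=a_0\phi_b$ when $r_b\notin M$ and $\pi(b)=0$ otherwise, one obtains $a*b=a\odot\pi(b)$, so $N$ is precisely the field construction of \S\ref{secNearVecSpaces}.)

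The main obstacle is Step 2: extracting a full spread on $U$ from ``every orbit is additively closed'' and then running the Schur/centralizer count to force $d=k$, hence $\Phi$ cyclic. Everything before and after is bookkeeping with the description of planar nearrings in terms of $\Phi$, $R$ and $M$.
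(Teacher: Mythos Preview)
Your proof is correct, but it takes a genuinely different route from the paper's.

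The paper's argument is very short: it observes that the hypothesis ``every $a\Phi^*$ is additively closed'' says exactly that for all $n\in N$ and $\alpha,\beta\in\Phi^*$ there is $\gamma\in\Phi^*$ with $n\alpha+n\beta=n\gamma$, i.e.\ that the quasi-kernel $Q(N)$ equals $N$; together with $-1\in\Phi^*$ this makes $(N,\Phi^*)$ a nearvector space in Andr\'e's sense, and then a single citation (Andr\'e, \emph{Lineare Algebra \"uber Fastk\"orpern}, Satz~5.5) gives that $Q(N)=N$ forces $N$ to be an honest vector space and the scalar nearfield to be a field. So the paper outsources the hard step entirely to Andr\'e's structure theory.

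Your argument is self-contained and more elementary: you replace the appeal to Andr\'e by a direct proof that $\Phi$ is cyclic. The spread/Schur count in Step~2 is the heart of it --- the $q+1$ orbit-closures inside $C_0\oplus C_1$ give $q-1$ graph maps whose pairwise quotients land in the centralizer field $E=\mathrm{End}_{\mathbb{F}_p[\Phi]}(C_0)$, forcing $|E|\ge q$ and hence $\dim_E C_0=1$, so $\Phi\hookrightarrow E^\times$ is cyclic. This is a nice translation-plane style argument that does not appear in the paper. Step~3 then verifies the scalar-distributivity law orbit by orbit using commutativity of $\Phi$ and directness of $C_0\oplus b\Phi^*$; this is exactly the content hidden inside Andr\'e's theorem, worked out by hand in this special case.

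What each approach buys: the paper's is a two-line proof once one has the nearvector-space machinery on the shelf, and it explains \emph{why} the result holds (it is the $Q(N)=N$ case of Andr\'e's theory). Yours avoids that machinery entirely, at the cost of length, and makes the cyclicity of $\Phi$ --- which the paper never states explicitly --- the visible structural reason. Both are valid; yours would be preferable in a context where Andr\'e's theory is not assumed.
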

\begin{proof}
Let $F = a\Phi^*$ for some non zero multiplier $a$.
$F$ is additively and multiplicatively closed, thus a planar subnearring.
$F$ contains only one orbit of $\Phi$, so it has an identity and is thus a
nearfield.
Thus $F$ is prime power order. If $F$ is odd, then $\Phi$ is even order, thus $-1\in \Phi$.
If $F$ is even, then $-1=1\in \Phi$.

We see that $\Phi^*$ acting on $(N,+)$ satisfies the first four conditions for being a nearvector space.
By the additive closedness of each $n\Phi^*$, for each $\alpha,\beta \in \Phi^*$, $n\alpha + n\beta \in n\Phi^*$
so there exists some $\gamma\in\Phi^*$ such that $n\alpha + n\beta = n\gamma$.
Thus $Q(N) = N$ and $N$ is a near vector space over $\Phi^*$.
We see that $F$ must then be the nearfield from van der Walt's result above, $(\Phi^*,*) \cong (F,*)$.
By \cite[Satz 5.5]{Andre} we know that $Q(N)=N$ implies that $N$ is a vector space and that $F$ is actually a field.
\end{proof}

We note in passing that if a planar nearring has non trivial distributive elements, then
we know that the corresponding orbits give additively closed basic blocks.
Thus by \cite[Thm 7.17]{clay} we will obtain a statistical, but not a geometric BIBD.

We also obtain Aichinger's result as a corollary.

\begin{cor}
 Let $N$ be a planar ring. Then $N$ is a vector space over a field $F$ with $\Phi = F^*$.
\end{cor}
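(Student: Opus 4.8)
The plan is to deduce this corollary from Lemma~\ref{lemmaAllOrbitsAddClosed} applied to a planar ring. First I would recall that in a ring every element is distributive, so $D(N)=N$; in particular, invoking Lemma~\ref{lemmaDistAddClosed}, every orbit $a\Phi^*$ is additively closed. The hypotheses of Lemma~\ref{lemmaAllOrbitsAddClosed} also require that $N$ be finite, abelian, and have more than one orbit. Abelianness is immediate: a planar nearring with nontrivial distributive elements has $(\rho(N),+)$ abelian and, more directly here, a ring is trivially additively abelian. Having more than one orbit is equivalent (by the dichotomy recalled in the Background section) to $N$ not being a nearfield; since a planar ring that is a nearfield is a field, and a field already is a vector space over itself with $\Phi=F^*$, that degenerate case can be handled separately or simply absorbed.

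The one genuine gap is finiteness: Lemma~\ref{lemmaAllOrbitsAddClosed} is stated for finite $N$, whereas the corollary as phrased does not say ``finite planar ring.'' I would therefore either (a) restrict attention to the finite case, matching the scope of the lemma it is drawn from, or (b) appeal instead to the structural description of planar rings from \S\ref{secNearVecSpaces} --- namely \cite[theorem 4.1]{aichingerPlanarRings}, which says every planar ring arises from a vector space $V$ over a division ring $D$ of order at least $3$ via $a*b=a(b\phi)$ for a vector space epimorphism $\phi:V\to D$. From that description one reads off directly that $\Phi\cong D^*$ and that $N=V$ is a $D$-vector space; it then only remains to see that in the \emph{ring} case $D$ must be commutative, i.e.\ a field. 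The cleanest route for that last point is exactly the argument in Lemma~\ref{lemmaAllOrbitsAddClosed}: $F=a\Phi^*$ for a non-zero-multiplier $a$ is a planar subnearring with a single orbit, hence a nearfield with $(\Phi^*,*)\cong(F^*,*)$, and since $N$ is a ring this nearfield inherits full distributivity, so it is a field; then $\Phi=F^*$ and $N$ is a vector space over $F$.

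The main obstacle is thus bookkeeping about the scope (finite versus general) and the trivial-orbit edge case, not any substantial computation: the real content has already been packaged into Lemma~\ref{lemmaAllOrbitsAddClosed} (and, behind it, van der Walt's theorem and \cite[Satz 5.5]{Andre}). Concretely, the proof I would write is: observe $D(N)=N$, so by Lemma~\ref{lemmaDistAddClosed} all orbits are additively closed; if $N$ has one nontrivial orbit it is a planar nearfield which, being a ring, is a field, and the claim is immediate; otherwise $N$ is (finite and) abelian with more than one orbit and all orbits additively closed, so Lemma~\ref{lemmaAllOrbitsAddClosed} applies and gives that $N$ is a vector space over a subfield $F$ with $(\Phi^*,*)\cong(F^*,*)$, i.e.\ $\Phi=F^*$, as required.
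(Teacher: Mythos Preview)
Your overall plan is sound and close to the paper's, but there are two points worth flagging. First, your route (b) as written is circular: \cite[theorem~4.1]{aichingerPlanarRings} \emph{is} Aichinger's structure theorem for planar rings, and the whole point of this corollary is to recover that result from the machinery of \S\ref{secDist}, so you cannot appeal to it. Second, the paper does not invoke Lemma~\ref{lemmaAllOrbitsAddClosed} at all; instead it re-runs only the relevant part of that lemma's argument directly. After observing via Lemma~\ref{lemmaDistAddClosed} that every orbit is additively closed, the paper picks a non zero multiplier $d$, notes $d*N$ is a distributive nearfield (hence a field), observes that additive closure of all orbits gives $Q(N)=N$, and then cites \cite[Satz~5.5]{Andre} to conclude $N$ is a vector space.

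This direct route buys exactly what you were worried about: it needs neither finiteness nor the ``more than one orbit'' hypothesis of Lemma~\ref{lemmaAllOrbitsAddClosed}, so no case split is required. (In a ring the condition $-id\in\Phi$, which Lemma~\ref{lemmaAllOrbitsAddClosed} obtained via a parity argument on $|F|$, comes for free: if $r$ is a right identity then $x*(-r)=-x$ for all $x$.) Your instinct that ``the real content has already been packaged'' is correct, but the clean way to extract it is to replay the $Q(N)=N$ step rather than to quote the packaged lemma with its extra hypotheses.
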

\begin{proof}
All elements of $N$ are distributive, so we know by lemma \ref{lemmaDistAddClosed} that every orbit is additively closed.
Select some non zero multiplier $d\in N$ and we see that $d*N$ is a distributive nearfield, that is, a field.
All orbits are additively closed, so we obtain a nearvector space with $Q(N)=N$, so by the same argument as above,
we know that $N$ is a vector space.
\end{proof}

\section{The Generalized Centre}
\label{secGenCentre}

Now that we know some things about the distributive elements of a planar nearring, we can 
say some things about the generalized centre.

Let $(N,+,*)$ be a 0-symmetric nearring. 
The \emph{generalized centre} of $N$ is $GC(N) = \{n\in N \vert nd=dn\; \forall d\in D(N)\}$
\cite{AF04,CFK07}.

The generalized centre was introduced because the centre of a nearring is not always well behaved.
For instance, it is not always a subnearring, while the generalized centre is.
If $N$ is a ring, then $GC(N)$ is the usual centre of $N$.

The generalizes centre of a planar nearfield $F$ is the set of elements that commute with the kern.
In the finite case, the kern is the multiplicative centre and thus $GC(F) = F$.
In the infinite case, when the kern is distinct from the multiplicative centre, we know
only that $GC(F)$ contains the multiplicative centre.

\begin{thm}
 Let $N$ be a planar nearring. Then $GC(N)$ is one of four cases:
 \begin{enumerate}
  \item If $D(N)$ intersects only zero multiplier orbits, then $GC(N)$ is the zero multipliers, an ideal.
  \item If $D(N)$ intersects more than one orbit of $\Phi$ and at least one of them is not a zero multiplier, then $GC(N)=\{0\}$.
  \item If $D(N)$ intersects exactly one orbit of $\Phi$, which is not a zero multiplier, let it be $a\Phi$. 
  Then $aZ(\Phi)^* \leq GC(N) \leq a\Phi^*$.
  \item If $D(N)=\{0\}$, then $GC(N) = N$.
 \end{enumerate}
\end{thm}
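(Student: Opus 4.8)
The plan is to treat the four cases directly from the orbit description of the multiplication, using that $a*b=a\phi_b$ when $b\neq0$ and $r_b\notin M$ and $a*b=0$ otherwise, that $N$ is zero symmetric, and that each element factors uniquely as $n=r_n\phi_n$. Case (4) is immediate: if $D(N)=\{0\}$ then the only requirement on $n\in GC(N)$ is $n*0=0*n$, which holds for all $n$, so $GC(N)=N$. For case (1), $D(N)$ is nontrivial, so Lemma~\ref{lemmaHomo} gives that the set $K=M\Phi^*$ of zero multipliers is an ideal, and I claim $GC(N)=K$. If $n\in K$ and $d\in D(N)$, then both $n*d$ and $d*n$ vanish because in each the right-hand factor lies in $K\cup\{0\}$; hence $K\subseteq GC(N)$. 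Conversely, if $n\neq0$ is not a zero multiplier, pick a nonzero $d\in D(N)$: by hypothesis $d$ is a zero multiplier, so $n*d=0$ while $d*n=r_d\phi_d\phi_n\neq0$, so $n\notin GC(N)$; thus $GC(N)=K$, an ideal.

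For case (2), suppose $n\in GC(N)$ with $n\neq0$ and fix a non zero multiplier $d_1\in D(N)$, which exists by hypothesis. If $n$ were a zero multiplier then $d_1*n=0\neq n*d_1$, a contradiction; so $n$ is not a zero multiplier. If $D(N)$ contained a nonzero zero multiplier $d_0$ then $n*d_0=0\neq d_0*n$, again impossible; so every nonzero element of $D(N)$ is a non zero multiplier. Then for each nonzero $d\in D(N)$ the identity $n*d=d*n$ reads $r_n(\phi_n\phi_d)=r_d(\phi_d\phi_n)$, and uniqueness of the factorisation forces $r_n=r_d$. Hence all nonzero distributive elements lie in the single orbit $r_n\Phi$, contradicting that $D(N)$ meets at least two orbits; therefore $GC(N)=\{0\}$.

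For case (3), let $a=r_a\in R$ be the representative of the unique orbit met by $D(N)$ (the set $aZ(\Phi)^*$ is understood relative to this choice). For the upper bound, if $n\in GC(N)$ with $n\neq0$, then as in case (2) $n$ is not a zero multiplier, and for a nonzero $d\in D(N)$ the equation $n*d=d*n$ becomes $r_n\phi_n\phi_d=r_a\phi_d\phi_n$, forcing $r_n=r_a$, so $n\in a\Phi^*$. For the lower bound, take $n=a\psi$ with $\psi\in Z(\Phi)$ (the case $n=0$ being trivial); every nonzero $d\in D(N)$ has the form $r_a\phi_d$, and then $n*d=r_a(\psi\phi_d)=r_a(\phi_d\psi)=d*n$ since $\psi$ is central, while $n*0=0*n$ holds trivially, so $n\in GC(N)$. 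Hence $aZ(\Phi)^*\leq GC(N)\leq a\Phi^*$. None of the computations is deep; the step demanding most care is the bookkeeping in cases (2) and (3): one must keep track at every stage of whether each element is a zero multiplier, repeatedly invoke the uniqueness of $n=r_n\phi_n$ to read off both the representative and the $\Phi$-component from an equation in $N$, and keep $a$ fixed as the distinguished orbit representative throughout case (3).
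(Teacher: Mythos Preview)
Your proof is correct and follows essentially the same approach as the paper's: a direct case analysis using the orbit factorisation $n=r_n\phi_n$, the zero-multiplier criterion, and Lemma~\ref{lemmaHomo} for Case~(1). Your treatment is in fact slightly more explicit than the paper's in Case~(1) (you verify both inclusions $K\subseteq GC(N)$ and $GC(N)\subseteq K$, whereas the paper leaves the second implicit), and in Case~(2) you reorganise the argument by first forcing every nonzero $d\in D(N)$ into a single orbit rather than fixing two distributive elements in distinct orbits at the outset, but the underlying idea---reading off $r_n=r_d$ from $n*d=d*n$ via unique factorisation---is identical.
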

\begin{proof} We proceed by cases.

Case 1: Suppose $D(N)$ intersects only zero multiplier orbits.
Then for all $d\in D(N)$, for all $n\in N$,  $nd=0$. Thus if $r_n \in M$, $dn=0$ and thus $n \in GC(N)$. 
So $GC(N) = \cup \{ a\Phi^* \vert a \in M\}$, the zero multipliers, which we know from lemma \ref{lemmaHomo} to be an ideal.

Case 2: Suppose $D(N)$ intersects two  orbits nontrivially, one of them is a non zero multiplier orbit.
Let $a,b\in D(N)$ be in distinct orbits, $a$ not a zero multiplier.
Then a nonzero $c\in GC(N)$ implies that $ca=ac$, so $r_a=r_c$ and $c$ is not a zero multiplier. 
Thus $cb=bc$ implies that $b$ is also not a zero multiplier, so $r_c=r_b$, but $r_a\neq r_b$ so we have a contradiction,
so $GC(N)$ is trivial.

Case 3: Let $c\in GC(N)$, so $cd=dc$ for all $d\in D(N)$. Then $r_c=r_d$ so $c \in a\Phi^*$, giving us the upper bound.
We know that $F := a\Phi^*$ is a nearfield, so $D(N)=K$ is the kern of $F$.
If $K^* \mathrel{\unlhd} F^*$ as multiplicative groups, then $K=Z(F)$ by  \cite{Andre1963},
so $GC(N)=F=a\Phi^*$ showing that this bound can be achieved, for instance in the finite case.
Otherwise $\phi_c \in Z(\Phi)$  gives us the lower bound.

Case 4: If $D(N)$ is trivial, then by zerosymmetry, $GC(N)=N$.
\end{proof}

We can break this down depending upon the properties of the additive group.

\begin{cor}
 Let $N$ be a planar nearring with nonabelian additive group.
  Then $GC(N)$ is one of four cases:
 \begin{enumerate}
  \item if $D(N)$ intersects only zero multiplier orbits, then $GC(N)$ is the zero multipliers, an ideal.
  \item If $D(N)$ intersects more than one orbit of $\Phi$ and at least one of them is not a zero multiplier, then $GC(N)=\{0\}$.
  \item If $D(N)$ intersects exactly one orbit of $\Phi$, which is not a zero multiplier, let it be $a\Phi$, then $GC(N) =D(N)= a\Phi^*$.
  \item If $D(N)$ intersects no nonzero orbit of $\Phi$, then $GC(N) = N$.
 \end{enumerate}
\end{cor}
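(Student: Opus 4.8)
The plan is to specialise the preceding theorem on $GC(N)$ to the case where $(N,+)$ is nonabelian, treating the same four cases. Cases 1, 2 and 4 are immediate: in Case 1 the statement about the zero multipliers forming an ideal is verbatim the same conclusion; Case 2 is identical; and in Case 4 we only need to observe that ``$D(N)$ intersects no nonzero orbit of $\Phi$'' is the same hypothesis as ``$D(N)=\{0\}$'', since $D(N)$ always contains $0$, and then apply zero-symmetry. So the only real content is Case 3, where the general theorem gives the sandwich $aZ(\Phi)^* \leq GC(N) \leq a\Phi^*$ and we must upgrade this to the equality $GC(N)=D(N)=a\Phi^*$.

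For Case 3 the key point I would exploit is that $a\Phi^*$ is a planar nearfield $F$ (by Lemma~\ref{lemmaNearfield}, since $a$ is a non zero multiplier), and a nearfield has abelian additive group — indeed the additive group of any planar nearring is abelian when it is a nearfield, and more to the point $F$ here is an additive \emph{subgroup} of $(N,+)$. First I would argue that under the hypothesis that $(N,+)$ is nonabelian, $D(N)$ itself cannot be all of a non–zero-multiplier orbit's closure in the way that would shrink $GC(N)$; concretely, $D(N) = K$, the kern of $F$, which is the additive group of a subnearfield of $F$, hence also abelian. The dichotomy in the original proof was between $K^* \mathrel{\unlhd} F^*$ (giving $GC(N)=F$) and the complementary case (giving only the lower bound $aZ(\Phi)^*$). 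I would show that in our setting the second alternative is forced to collapse to the first. The mechanism: if $F$ is a finite nearfield the kern is automatically the multiplicative centre and normal, so $GC(N)=F=a\Phi^*=D(N)$; and I would argue that the nonabelian hypothesis on $(N,+)$, combined with the structure theorem (Theorem~\ref{thmsemidirect}) — which says that a non–zero-multiplier distributive element makes $(N,+)\cong K\rtimes F$ — forces something that rules out the pathological infinite case, or else one simply restricts attention to the finite case where the corollary is cleanly true.

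Let me be more careful about where the nonabelian hypothesis actually bites. In Theorem~\ref{thmsemidirect}, $N\cong K\rtimes F$ with $F$ a nearfield and $K$ an additive group carrying a fixed-point-free $\Phi$-action; $(N,+)$ is nonabelian precisely when this semidirect product is nontrivial, i.e.\ when $\Phi$ acts nontrivially on $K$ and $K\neq 0$. So in Case 3 we \emph{do} have $K\neq 0$, the zero multipliers are a genuine nonzero ideal $K\times\{0\}$, and $D(N)\subseteq a\Phi^*$ where $a\Phi^* = \{0\}\times F^*\cup\{(0,0)\}$ is identified with the subnearfield $F$. The cleanest route: show $GC(N)\supseteq D(N)$ directly — every distributive element commutes with every distributive element because $D(N)$ is contained in the single orbit $a\Phi^*$, which is a nearfield, so for $d_1,d_2\in D(N)$ with $d_i = a\psi_i$ we have $d_1 d_2 = a(\psi_1\psi_2)$ (using additive closure and the $\circ$-multiplication from Lemma~\ref{lemmaNearfield}); but wait, this is only symmetric if $\Phi$ restricted there is abelian, which is exactly the kern condition. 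The safe statement I would actually prove is: $GC(N)\leq a\Phi^*$ (upper bound, as in the general theorem) and $D(N)\leq GC(N)$ requires $D(N)$ to be \emph{central} in the nearfield $F$ — which is the hypothesis that identifies $D(N)$ as the kern and the kern as the centre. I therefore expect the corollary as literally stated to need either the finiteness assumption carried over implicitly, or an additional observation; I would handle this by invoking that for the cases of interest (and certainly all finite ones, matching the paper's examples) the kern of $F$ is its multiplicative centre, giving $GC(N)=F=a\Phi^*=D(N)$, and note that $a\Phi^* = aZ(\Phi)^*$ exactly when $Z(\Phi)=\Phi$, i.e.\ when $\Phi$ is abelian, which holds when $F$ is a finite nearfield with $K=Z(F)=F$, forcing $F$ to be a field.

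The main obstacle, then, is reconciling the lower bound $aZ(\Phi)^*$ from the general theorem with the claimed equality $GC(N)=a\Phi^*$: this gap is real in full generality (infinite nearfields with kern strictly inside the centre's failure to be normal), and closing it cleanly is where I would have to either cite the finite-nearfield fact that the kern equals the centre, or show that the nonabelian additive hypothesis is incompatible with the pathological configuration. I would bet the intended proof simply reads ``Case 3 now follows from the previous theorem together with the fact that in a nearfield arising here the kern is the multiplicative centre'' — i.e.\ the corollary is meant to be read with the implicit finiteness that pervades the section's examples, and Cases 1, 2, 4 are one-line restatements.
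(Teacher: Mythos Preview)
Your handling of Cases 1, 2 and 4 is fine. The gap is in Case 3, where you miss the actual mechanism by which the nonabelian hypothesis closes the interval $aZ(\Phi)^* \leq GC(N) \leq a\Phi^*$.

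The paper's argument is essentially one line: if $(N,+)$ is nonabelian then the fixed-point-free automorphism group $\Phi$ must have odd order (a group admitting a fixed-point-free automorphism of order $2$ is abelian), and an odd-order fixed-point-free automorphism group is cyclic (see e.g.\ \cite{mayrthesis}). Hence $Z(\Phi)=\Phi$, and the lower and upper bounds from the general theorem coincide, giving $GC(N)=a\Phi^*$. The equality $D(N)=a\Phi^*$ then drops out of Lemma~\ref{lemmaDistMultClosed}, since the subgroup $\{\phi\in\Phi\mid r_d\phi\in D(N)\}$ contains $Z(\Phi)=\Phi$.

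You actually wrote the key observation yourself --- ``$a\Phi^* = aZ(\Phi)^*$ exactly when $Z(\Phi)=\Phi$, i.e.\ when $\Phi$ is abelian'' --- but then attributed its truth to implicit finiteness or to the $K\rtimes F$ decomposition rather than to the nonabelian additive hypothesis. Finiteness is neither assumed nor needed in this corollary; it is the classical structural fact about fixed-point-free automorphism groups of nonabelian groups that does all the work. Your attempt to extract information from Theorem~\ref{thmsemidirect} goes nowhere here: the semidirect-product structure of $(N,+)$ says nothing about whether $\Phi$ is abelian, and your claim that ``$(N,+)$ is nonabelian precisely when this semidirect product is nontrivial'' is not the right diagnosis (and in any case does not constrain $\Phi$).
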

\begin{proof} 
Only the third case is different to the above.
If $N$ is additively nonabelian, then we know that the fixed point free automorphism group
is of odd order, thus cyclic, see e.g.\ \cite{mayrthesis}.
Thus $Z(\Phi)=\Phi$, so $D(N)$ is all of one orbit with zero.
Because the multiplication within this orbit is commutative, the generalized centre is all of the orbit and we are done.
\end{proof}

If the additive group is abelian, then many interesting and strange things can happen with skew fields.
However in the finite case we know more.

\begin{cor}
 Let $N$ be a finite planar nearring with abelian additive group.
  Then $GC(N)$ is one of four cases:
 \begin{enumerate}
  \item If $D(N)$ intersects only zero multiplier orbits, then $GC(N)$ is the zero multipliers, an ideal.
  \item If $D(N)$ intersects more than one orbit of $\Phi$ and at least one of them is not a zero multiplier, then $GC(N)=\{0\}$.
  \item If $D(N)$ intersects exactly one orbit of $\Phi$, which is not a zero multiplier, let it be $a\Phi$. Then $GC(N) = a\Phi^* \geq D(N)$.
  \item If $D(N)$ intersects no nonzero orbit of $\Phi$, then $GC(N) = N$.
 \end{enumerate}
\end{cor}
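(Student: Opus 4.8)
The plan is to observe that the four cases listed here are, item by item, the four cases of the Theorem above, and that in Cases 1, 2 and 4 the arguments given there use nothing about $(N,+)$ beyond zero symmetry, so they carry over verbatim; only Case 3 needs additional work, namely upgrading the general bounds $aZ(\Phi)^*\le GC(N)\le a\Phi^*$ to the equality $GC(N)=a\Phi^*$ using the finiteness hypothesis.

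So the content is all in Case 3. Here $D(N)$ meets exactly one orbit $a\Phi$, which is not a zero-multiplier orbit, so by Lemma \ref{lemmaNearfield} (as already used in the proof of the Theorem) $F:=a\Phi^*$ is a planar nearfield and $D(N)=K$ is its kern; since $N$ is finite, $F$ is a finite nearfield. The one nonroutine ingredient is the classical fact, already quoted in the discussion of $GC(F)$ for planar nearfields above, that the kern of a finite nearfield equals its multiplicative centre; hence $K^*=Z(F^*)$, and in particular $K^*\mathrel{\unlhd}F^*$. Feeding this into the Case 3 argument of the Theorem — that is, invoking \cite{Andre1963} to get $K=Z(F)$ — every element of $F$ then commutes with all of $D(N)=K$, so $F\subseteq GC(N)$; together with the upper bound $GC(N)\subseteq a\Phi^*=F$ established there this yields $GC(N)=a\Phi^*$. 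The trailing relation $GC(N)\ge D(N)$ is then just the containment $D(N)=K\subseteq F=a\Phi^*$.

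I do not anticipate a genuine obstacle; the single point deserving care is that in the abelian case $\Phi$ need not be cyclic, so — unlike in the nonabelian corollary — one cannot conclude that $D(N)$ exhausts the orbit $a\Phi^*$ (the kern may be a proper subnearfield of $F$, for instance for the proper nearfield of order $9$). Hence the equality $GC(N)=a\Phi^*$ really rests on the kern/centre coincidence in the finite setting and not on $D(N)$ being all of $a\Phi^*$, and it is worth flagging that this is precisely why finiteness, and not merely commutativity of the addition, is imposed here.
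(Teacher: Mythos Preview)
Your argument is correct, but it takes a different route from the paper's in Case~3. The paper exploits the abelian hypothesis directly: commutativity of $+$ makes $D(N)$ additively closed, hence a (planar) subring of the nearfield $F=a\Phi^*$; invoking Aichinger's structure theorem for planar rings then forces $(D(N)^*,\cdot)$ to be the multiplicative group of a finite field, hence cyclic, and from this the paper concludes $D(N)^*=Z(\Phi)$. You instead bypass abelianness entirely and appeal straight to the classical finite-nearfield fact kern $=$ centre (which the paper itself records just before the Theorem), obtaining $D(N)=Z(F)$ in one stroke. Your route is shorter and, as you rightly flag, shows that finiteness alone already pins down Case~3; the abelian hypothesis in this corollary is doing work only in the paper's particular argument, not in the conclusion. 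One cosmetic point: once you have invoked ``kern of a finite nearfield $=$ multiplicative centre'' you already have $K=Z(F)$, so the subsequent detour through $K^*\mathrel{\unlhd}F^*$ and \cite{Andre1963} is redundant --- harmless, but you can drop it.
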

\begin{proof}
Only the third case is different to the above.
We note that abelian addition implies that the distributor $D(N)$ is additively closed.
Thus $D(N)$ is a planar ring.
By \cite{aichingerPlanarRings} we know that a planar ring is derived from a vector space over a field, where the field multiplication
is isomorphic to the fixed point free automorphisms of the planar ring.
We know that $D(N)$ lies within one orbit which is a nearfield, so $(D(N)^*,\cdot)$ is a cyclic subgroup
of $\Phi$ and thus  precisely the centre of $\Phi$.
Thus all elements of the orbit containing the distributor are in the generalized centre.
\end{proof}

\section{Conclusion}

In this paper we have investigated the distributive elements in a planar nearring.
We have been able to show that if there are nontrivial distributive elements then the additive
group is an extension of an abelian subgroup by the zero multipliers.
This additive group is the additive group of a nearfield, so elementary abelian in the finite case.
If the distributive elements include non zero multipliers, then the extension splits and
we obtain a clear structure.

As a result, we are able to re-prove Aichinger's theorem on planar rings as a corollary,
as well as Clay's results on BIBDs with additively closed basic blocks.
It is unclear whether lemma \ref{lemmaAllOrbitsAddClosed} can be extended to the infinite case.
Applying these results to the question of the generalized centre, we are able to obtain a clear
set of cases and to describe the generalized centre.

It would be valuable to know what sort of other examples can occur with the distributive elements
all lying within the zero multipliers, in order to complete the classification of structures.

It would be of value to calculate $D(N)$ explicitly in theorem \ref{thmsemidirect}.
It is easy to see that $D(N)$ is a direct product of some subset $E\subseteq K$ and the kernel of $F$.
The question is how to calculate which parts of $K$ have $a(\phi_{b+c}) = a\phi_b + a\phi_c$ 
where the first addition is in $F$ while the second is in $K$.
Note that the orbits will be  additively closed, giving us nearfields. This
might be another  nearvector space construction.

As we have been able to determine the generalized centre of planar nearrings, we can now look forward to describing the 
generalized centre of more complex classes of nearrings.

\section{Acknowledgements}
 Research supported by SFB Project F5004 of the Austrian Science Foundation, FWF.
 I would like to thank my colleagues G\"unter Pilz and Wen-Fong Ke for some 
 insightful questions in the early development of this paper.


\begin{thebibliography}

\bibitem{sonata}
E.~Aichinger, F.~Binder, J.~Ecker, P.~Mayr, and C.~N{\"o}bauer.
\newblock {\em {SONATA} - system of near-rings and their applications, {GAP}
  package, Version 2.6}, 2012.
\newblock \verb+(http://www.algebra.uni-linz.ac.at/Sonata/)+.

\bibitem{aichingerPlanarRings}
E. Aichinger.
\newblock Planar rings.
\newblock {\em Results Math.}, 30(1-2):10--15, 1996.

\bibitem{AF04}
E. Aichinger and M. Farag.
\newblock On when the multiplicative center of a near-ring is a subnear-ring.
\newblock {\em Aequationes Math.}, 68(1-2):46--59, 2004.

\bibitem{Andre1963}
J. Andr{\'e}.
\newblock Uber eine {B}eziehung zwischen {Z}entrum und {K}ern endlicher
  {F}astk{\"o}rper.
\newblock {\em Archiv der Mathematik}, 14(1):145--146, 1963.

\bibitem{Andre}
J. Andr{\'e}.
\newblock Lineare {A}lgebra \"uber {F}astk\"orpern.
\newblock {\em Math. Z.}, 136:295--313, 1974.

\bibitem{wendtAg}
B.~B\"ack, H.~K\"oppl, G.~Pilz, and G.~Wendt.
\newblock Einflu\ss verschiedener {P}arameter auf den {M}ykotoxingehalt von
  {W}interweizen {V}ersuchsdurchf\"uhrung mit {H}ilfe eines neuen statistischen
  {M}odells ({I}nfluence of various parameters on the mycotoxin content of
  winter wheat: experimental process with the assistance of a new statistical
  model).
\newblock In {\em Proceedings, 63th ALVA-Tagung, Raumberg, Austria}, 2008.

\bibitem{CFK07}
G.~A. Cannon, M. Farag, and L. Kabza.
\newblock Centers and generalized centers of near-rings.
\newblock {\em Comm. Algebra}, 35(2):443--453, 2007.

\bibitem{CFKN16}
G. A. Cannon, M. Farag, L. Kabza, and K. M. Neuerburg.
\newblock Centers and generalized centers of near-rings without identity
  defined via {M}alone-like multiplications.
\newblock {\em Mathematica Pannonica}, to appear.

\bibitem{clay72}
J. R. Clay.
\newblock Generating balanced incomplete block designs from planar near rings.
\newblock {\em J. Algebra}, 22:319--331, 1972.

\bibitem{clay}
J. R. Clay.
\newblock {\em Nearrings, Geneses and applications}.
\newblock (Oxford Science Publications. The Clarendon Press, Oxford University
  Press, New York, 1992).


\bibitem{ferrerobook}
C. Cotti~Ferrero and G. Ferrero.
\newblock {\em Nearrings, Some developments linked to semigroups and groups}, volume~4 of { Advances in Mathematics}.
\newblock (Kluwer Academic Publishers, Dordrecht, 2002).
 


\bibitem{karzel84}
H. Karzel.
\newblock Fastvektorr\"aume, unvollst\"andige {F}astk\"orper und ihre
  abgeleiteten geometrischen {S}trukturen.
\newblock {\em Mitt. Math. Sem. Giessen}, (166):127--139, 1984.

\bibitem{karzelkist84}
H. Karzel and G. Kist.
\newblock Determination of all near vector spaces with projective and affine
  fibrations.
\newblock {\em J. Geom.}, 23(2):124--127, 1984.

\bibitem{wendtMatrix}
W.-F. Ke, J. H. Meyer, and G. Wendt.
\newblock Matrix maps over planar near-rings.
\newblock {\em Proc. Roy. Soc. Edinburgh Sect. A}, 140(1):83--99, 2010.

\bibitem{KePilz}
W.-F. Ke and G. Pilz.
\newblock Abstract algebra in statistics.
\newblock {\em Journal of Algebraic Statistics}, pages 6--12, 2010.

\bibitem{mayrthesis}
P. Mayr.
\newblock Fixed point free automorphism groups.
\newblock Master's thesis, Johannes Kepler University Linz, 1998.

\bibitem{pilzbook}
G. Pilz.
\newblock {\em Near-rings. The theory and its applications.}, volume~23 of { North-Holland Mathematics
  Studies}.
\newblock (North-Holland Publishing Co., Amsterdam, second edition, 1983).
 

\bibitem{vanderWalt92}
A. P.~J. van~der Walt.
\newblock Matrix near-rings contained in {$2$}-primitive near-rings with
  minimal subgroups.
\newblock {\em J. Algebra}, 148(2):296--304, 1992.

\bibitem{waehling}
H. W{\"a}hling.
\newblock {\em Theorie der {F}astk\"orper}.
\newblock (Thales-Verlag, Essen, 1987).

\bibitem{wendtMin}
G.~Wendt.
\newblock Minimal left ideals of near-rings.
\newblock {\em Acta Math. Hungar.}, 127(1-2):52--63, 2010.

\bibitem{wendtDiss}
G. Wendt.
\newblock { Planarity in Near-Rings}.
\newblock PhD thesis, Johannes Kepler University Linz, 2004.

\end{thebibliography}


\label{lastpage}

\end{document}